\setlist{nolistsep}
\numberwithin{equation}{section}
\newtheorem{theorem}{Theorem}
\newtheorem*{theorem*}{Main Theorem}
\newtheorem{lem}[theorem]{Lemma}
\newtheorem{coro}[theorem]{Corollary}
\newtheorem{prop}[theorem]{Proposition}
\theoremstyle{definition}
\newtheorem{remark}[theorem]{Remark}
\numberwithin{theorem}{section}
\theoremstyle{plain}
\newtheorem{thmx}{Theorem}
\newtheorem{prpx}[thmx]{Proposition}
\newcommand{\tr}{\mathrm{tr}}
\newcommand{\slc}{{\rm SL_2}(\mathbb{C})}
\newcommand{\slk}{{\rm SL_2}(K)}
\newcommand{\slki}{{\rm SL_2}(K(i))}
\newcommand{\chk}{\mathrm{char}(K)}
\newcommand{\pslk}{{\rm PSL_2}(K)}
\newcommand{\pslki}{{\rm PSL_2}(K(i))}
\newcommand{\pslc}{{\rm PSL_2}(\mathbb{C})}
\newcommand{\qp}{\mathbb{Q}_p}
\newcommand{\fq}{\mathbb{F}_q}
\newcommand{\fp}{\mathbb{F}_p}
\newcommand{\zz}{\mathbb{Z}}
\newcommand{\oo}{\mathcal{O}_K}
\newcommand{\fix}{{\rm Fix}}
\newcommand{\ax}{{\rm Ax}}
\newcommand{\G}{\mathbb{G}}
\newcommand\setsymbol[1][]{\nonscript\:#1\vert\allowbreak\nonscript\:\mathopen{}}
\providecommand\given{} 
\DeclarePairedDelimiterX{\set}[1]\{\}{\renewcommand\given{\setsymbol[\delimsize]}#1}
\DeclarePairedDelimiterX{\gen}[1]{\langle}{\rangle}{\renewcommand\given{\setsymbol[\delimsize]}#1}
\begin{document}
\title{Lifting subgroups of $\mathrm{PSL}_2$ to $\mathrm{SL}_2$ over local fields}

\author[Andrew]{Naomi Andrew}
\author[Conder]{Matthew J. Conder}
\author[Markowitz]{Ari Markowitz}
\author[Schillewaert]{Jeroen Schillewaert}

\address[Andrew]{Mathematical Institute, Andrew Wiles Building, Radcliffe Observatory Quarter, University of Oxford, Oxford OX2 6GG, United Kingdom}
\address[Conder, Markowitz and Schillewaert]{Department of Mathematics, University of Auckland, 38 Princes Street, 1010 Auckland, New Zealand}

\email{naomi.andrew@maths.ox.ac.uk}
\email{\{matthew.conder,ari.markowitz,j.schillewaert\}@auckland.ac.nz}


\maketitle

\begin{abstract}
Let $K$ be a non-archimedean local field.
We show that discrete subgroups without 2-torsion in $\pslk$ can always be lifted to $\slk$, and provide examples (when $\chk \neq 2$) which cannot be lifted if either of these conditions is removed.
We also briefly discuss lifting representations of groups into $\pslk$ to $\slk$.
\end{abstract}

\section{Introduction}

Let $\pi \colon \Gamma_1 \twoheadrightarrow \Gamma_2$ be a surjective group homomorphism. We say that a subgroup $G$ of $\Gamma_2$ {\it lifts} to $\Gamma_1$ if there exists a homomorphism $\phi \colon G \to \Gamma_1$ such that $\pi \circ \phi$ is the identity on $G$.

Here we consider the case of $\pi \colon \slk \twoheadrightarrow \pslk$ over a field $K$. If ${\rm char}(K)=2$, then $\slk \cong \pslk$, so every subgroup of $\pslk$ lifts to $\slk$. Hence we will assume ${\rm char}(K) \neq 2$. In this case, $-I$ is the unique element of order 2 in $\slk$, so any subgroup of $\pslk$ containing elements of order 2 necessarily cannot lift to $\slk$. 

Let $K$ be a finitely generated subgroup of $\mathrm{SL}(2,\bar{\mathbb{Q}})$, where $\bar{\mathbb{Q}}$ are the algebraic numbers. Even though $K$ might not be discrete in $\mathrm{SL}(2,\mathbb{C})$ it embeds discretely in a finite direct product with factors $\mathrm{SL}(2,k_i)$ where the $k_i$ are local fields determined by the finitely many entries of the generators.

Each local field is either archimedean (and isomorphic to $\mathbb{R}$ and $\mathbb{C}$ with their standard topologies) or non-archimedean (and totally disconnected).

Culler proved in \cite[Theorem 3.1]{Culler} that discrete subgroups of $\pslc$ with no 2-torsion can be lifted to $\slc$. Here we prove the following non-archimedean analogon of this result. Although every non-archimedean local field of characteristic zero (i.e. a finite extension of $\mathbb{Q}_p$) has the same first order theory as $\mathbb{C}$, the differing topologies mean that, even in characteristic $0$, we require a different approach here than that used by Culler.

\begin{thmx}\label{main}
Let $K$ be a non-archimedean local field with $\chk \neq 2$. A discrete subgroup $G$ of $\pslk$ can be lifted to a subgroup of $\slk$ if and only if $G$ has no $2$-torsion.
\end{thmx}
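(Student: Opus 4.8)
The plan is to treat the two implications separately, with essentially all the content lying in the ``no $2$-torsion implies liftable'' direction. For the converse I would just recall the observation already made in the introduction: since $\chk \neq 2$, the matrix $-I$ is the unique involution in $\slk$, so if $G$ contained an element $g$ of order $2$, any homomorphism $\phi$ lifting it would force $\phi(g)=-I$ and hence $\pi(\phi(g))=1\neq g$, a contradiction. Thus $2$-torsion is a genuine obstruction, and it remains to show its absence suffices.

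For the main direction I would recast lifting as the splitting of a central extension and kill the obstruction using the action of $\pslk$ on its Bruhat–Tits tree $T$. First I would record the structural input: $\slk$ acts on $T$ with compact vertex stabilizers (conjugates of $\mathrm{SL}_2(\oo)$) while preserving the two vertex types, hence without inversions, and since $-I$ acts trivially the induced $\pslk$-action inherits these properties. Because $G$ is discrete, each vertex or edge stabilizer $G_\sigma = G \cap \stab(\sigma)$ is a discrete subgroup of a compact group, hence finite; by Cauchy's theorem the no-$2$-torsion hypothesis then forces every $G_\sigma$ to have \emph{odd} order.

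Next I would compute $H^2(G;\zz/2\zz)$, with trivial action since $\{\pm I\}$ is central in $\slk$. Contractibility of $T$ yields the short exact sequence of $\zz G$-modules
\[
0 \to \bigoplus_{e} \zz[G/G_e] \to \bigoplus_{v} \zz[G/G_v] \to \zz \to 0,
\]
with $v,e$ ranging over orbit representatives of vertices and edges; applying $\mathrm{Ext}^*_{\zz G}(-,\zz/2\zz)$ together with Shapiro's lemma produces the exact sequence
\[
\prod_{e} H^1(G_e;\zz/2\zz) \to H^2(G;\zz/2\zz) \to \prod_{v} H^2(G_v;\zz/2\zz).
\]
As each $G_\sigma$ has odd order, its cohomology with $\zz/2\zz$-coefficients vanishes in positive degrees, so both outer terms are zero and hence $H^2(G;\zz/2\zz)=0$. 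The preimage $\pi^{-1}(G)\leq \slk$ is a central extension of $G$ by $\zz/2\zz$ whose classifying class therefore vanishes, and a splitting of this extension is precisely a homomorphism $\phi\colon G \to \slk$ with $\pi\circ\phi=\mathrm{id}_G$, i.e.\ the desired lift.

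I expect the conceptual crux to be the cohomology step: the no-$2$-torsion hypothesis enters exactly by forcing odd-order stabilizers, which annihilate $\zz/2\zz$-cohomology and collapse the Mayer–Vietoris sequence. The points needing care are verifying that the $\pslk$-action is without inversions (so the tree resolution applies with no need to subdivide) and allowing $G$ to be neither cocompact nor finitely generated, where the resolution form of the sequence above is what one wants rather than the elementary amalgam/HNN version. As a more concrete alternative to the last two steps, one could instead send each finite stabilizer, necessarily odd cyclic, to its unique odd-order preimage in $\slk$ and check compatibility along inclusions; uniqueness of the odd-order lift makes the pieces glue over the associated graph of groups, though the cohomological argument is cleaner and handles the general case uniformly.
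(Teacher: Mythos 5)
Your proposal is correct, but it takes a genuinely different route from the paper. The paper also works with the action on the Bruhat--Tits tree, but proceeds via Bass--Serre theory: it first proves (Lemma \ref{stab-lift}) that each finite vertex/edge stabiliser admits a \emph{unique} lift to $\slk$ --- using the classification of finite $2$-torsion-free subgroups of $\pslk$ as either odd cyclic or of Borel type to verify that the ``take the odd-order preimage'' map is actually a homomorphism --- and then glues these unique lifts together over the quotient graph of groups, checking the edge relations $t_e\sigma_e(g)t_e^{-1}=\sigma_{\overline e}(g)$ by invoking uniqueness. Your argument instead establishes $H^2(G;\zz/2\zz)=0$ directly from the tree resolution, Shapiro's lemma, and the fact that odd-order groups have trivial $\zz/2\zz$-cohomology in positive degrees, and then splits the central extension $\pi^{-1}(G)\to G$; this is essentially the mechanism of the paper's Proposition \ref{lift-rep}, pushed through for discrete subgroups. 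What your route buys is economy: you only need the stabilisers to have odd order (Cauchy), not the finer classification into cyclic and Borel-type groups, and the without-inversions and non-finitely-generated issues you flag are handled exactly as you say. What the paper's route buys is an explicit, constructive lift together with the uniqueness statement on stabilisers, which it highlights in a remark as the real engine of the gluing. One small correction to your closing aside: the finite stabilisers are \emph{not} necessarily cyclic --- in positive characteristic they may be of Borel type, a semidirect product of an elementary abelian $p$-group with an odd cyclic group --- which is precisely why the paper's Lemma \ref{stab-lift} needs its second paragraph; your main cohomological argument is unaffected, since odd order suffices there.
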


We additionally obtain an analogon of Fricke's result that many Fuchsian groups, including all non-cocompact ones, are free products of a free group and a free product of cyclic groups amalgamated over an infinite cyclic subgroup \cite[p. 98]{Magnus}. For Kleinian groups the situation is far more complicated.

\begin{thmx}\label{char0}
Let $K$ be a non-archimedean local field of characteristic zero. If $G$ is a discrete subgroup of $\pslk$ with no $2$-torsion, then it is a free product whose possible factors are cyclic groups and direct products of a finite cyclic and an infinite cyclic group.
\end{thmx}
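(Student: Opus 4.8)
The plan is to exploit the action of $G$ on the Bruhat--Tits tree $T$ attached to $K$, the $(q+1)$-regular tree (where $q$ is the residue cardinality) on which $\slk$ acts by simplicial isometries without inversions; since $-I$ acts trivially, this descends to an action of $\pslk$, and hence of $G$, on $T$. The first step is to observe that discreteness forces all vertex and edge stabilizers in $G$ to be finite: a vertex stabilizer in $\slk$ is a conjugate of the compact open subgroup $\mathrm{SL}_2(\oo)$, and a discrete subgroup meets any such subgroup in a finite group. By Bass--Serre theory, $G$ is therefore the fundamental group of a graph of finite groups $(\mathcal{G},Y)$ with $Y=G\backslash T$, and no finite generation of $G$ is needed for this.

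Next I would pin down which finite groups can occur. Every finite subgroup of $G$ is a finite subgroup of $\pslk$ with no $2$-torsion; by Dickson's classification of finite subgroups of $\mathrm{PSL}_2$ over a field of characteristic $\neq 2$ (cyclic, dihedral, $A_4$, $S_4$, $A_5$), the only ones without $2$-torsion are cyclic of odd order. Hence every vertex and every edge group of $(\mathcal{G},Y)$ is cyclic of odd order.

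The geometric heart of the argument is to understand the edges carrying a nontrivial stabilizer. Since $\chk=0$, every nontrivial torsion element $g\in\slk$ is regular semisimple, with eigenvalues a root of unity $\lambda\neq\lambda^{-1}$; thus $g$ is elliptic, and its fixed set in $T$ is a subtree of one of two kinds: a single vertex when $g$ is anisotropic ($\lambda\notin K$), and a full apartment, i.e. a bi-infinite geodesic line $L$, when $g$ is split ($\lambda\in\oo^\times$). In particular only split torsion can fix an edge, so every edge of $T$ with nontrivial $G$-stabilizer lies on such a line. I would then show that two distinct such lines are disjoint: if two of them shared a vertex or an edge, the torsion elements fixing them would lie in a common (cyclic) vertex or edge stabilizer, forcing them to be powers of a single element and hence to fix the same line. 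Thus the edges with nontrivial stabilizer form a disjoint union of $G$-invariant lines, and for such a line I would compute $H=\stab_G(L)$: its pointwise stabilizer $F\trianglelefteq H$ is the finite cyclic group of split torsion fixing $L$, and the image of $H$ in $\mathrm{Isom}(L)\cong D_\infty$ lands in the translation subgroup $\cong\zz$, since any element reversing $L$ would be an inversion or a $2$-torsion element, both excluded. If this image is nontrivial it is generated by a hyperbolic $h$ fixing the two ends of $L$; every element fixing those two ends lies in the \emph{abelian} split torus stabilizing them, which also contains $F$, so $h$ centralizes $F$ and $H\cong F\times\langle h\rangle\cong C_m\times\zz$. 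Otherwise $H=F$ is finite cyclic.

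Finally I would collapse each $G$-orbit of these pairwise-disjoint lines to a point. Collapsing disjoint subtrees of a tree yields a tree $T'$, and $G$ acts on $T'$ with \emph{trivial} edge stabilizers (the surviving edges are exactly the trivial-stabilizer edges of $T$), while the vertex stabilizers are the original finite cyclic stabilizers together with the line stabilizers $H\cong C_m\times\zz$. By the Bass--Serre structure theorem for actions with trivial edge groups, $G$ is then the free product of these vertex stabilizers and a free group, i.e. a free product of finite cyclic groups, infinite cyclic groups, and groups $C_m\times\zz$, exactly as claimed. I expect the main obstacle to be the two geometric inputs of the third paragraph: establishing the fixed-point dichotomy (in particular that an anisotropic torsion element cannot fix an edge, which I would get from the reduction map to the residue field together with torsion-freeness of the principal congruence subgroup in odd residue characteristic) and the disjointness of distinct split-torsion lines. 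Once these are in place, the identification $H\cong C_m\times\zz$ and the final assembly are short consequences of the no-$2$-torsion hypothesis and the abelianness of a split maximal torus.
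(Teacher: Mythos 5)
Your overall strategy --- collapse the fixed sets of torsion elements, identify the stabilisers of the collapsed pieces as finite cyclic or $C_m\times\zz$, and finish with Bass--Serre theory for an action with trivial edge groups --- is exactly the paper's, but there is a genuine gap in your geometric description of fixed point sets, and the argument as written fails because of it. You assert that a nontrivial torsion element fixes either a single vertex (anisotropic case) or exactly a bi-infinite line (split case), so that ``the edges with nontrivial stabiliser form a disjoint union of $G$-invariant lines''. In fact the fixed point set is a \emph{band}: all vertices within distance $r$ of a nerve, where the nerve may be a vertex, an edge, \emph{or} a line, and where the radius $r$ (essentially $v(\lambda-\lambda^{-1})$) can be strictly positive. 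For a concrete failure, take $K=\qq_p(\zeta_p)$ with $p$ odd and $g=\mathrm{diag}(\zeta_p,\zeta_p^{-1})$: this is split torsion of odd order $p$, yet $v(\zeta_p-\zeta_p^{-1})>0$, so $g$ fixes every vertex within distance $1$ of its axis. Collapsing only the axis then leaves edges with nontrivial stabiliser, the quotient action does not have trivial edge groups, and the final step breaks. Likewise an anisotropic torsion element can perfectly well fix an edge (its nerve is the intersection of a geodesic in $T_{K'}$ with $T_K$, which can be the midpoint of an edge), so your claim that ``only split torsion can fix an edge'' is false --- and the proposed proof of it via torsion-freeness of a principal congruence subgroup would in any case need odd residue characteristic, which is not assumed (only $\chk\neq 2$ is).

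The repair is what the paper does: collapse the full fixed sets $\fix(g)$ rather than their nerves (\Cref{fix-classes}), and prove that fixed sets which meet are \emph{nested} --- commuting torsion elements have bands with the same nerve (\Cref{preserve-nerve}), and a chain argument (\Cref{key-collapse-gen}) shows that each connected component of the union of all fixed sets is itself a single band $\fix(g)$, so the collapse produces a tree with trivial edge stabilisers. The setwise stabiliser of such a band preserves its nerve, and your analysis of that stabiliser (finite cyclic when the nerve is a vertex or an edge; $C_m$ or $C_m\times\zz$ when it is a line, using the absence of $2$-torsion to exclude end-swapping) then goes through essentially verbatim. So the skeleton of your argument is sound and matches the paper's; what is missing is the band structure of the fixed sets and the nesting lemma that makes the collapse legitimate.
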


Button constructed explicit examples of indiscrete subgroups of $\mathrm{PSL}(2,\mathbb{C})$, $\mathrm{PSU}(2)$ and $\mathrm{PSL}(2,\mathbb{R})$ without 2-torsion which cannot be lifted up to $\mathrm{SL}(2,\mathbb{C})$ \cite{Button}. Here we prove a non-archimedean analogue of \cite[Theorems 2 \& 5]{Button}. 

\begin{thmx}\label{no-lift}
Let $K$ be a non-archimedean local field with $\chk \neq 2$. There exists a subgroup $G$ of $\pslk$ which does not lift to $\slk$, and whose non-trivial torsion elements are all unipotent. In particular, $G$ is torsion free if $\chk=0$ and, if $\chk=p$, then all possible non-trivial torsion elements have order $p$.
\end{thmx}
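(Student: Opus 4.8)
The plan is to reduce the entire statement to the construction of a single explicit relation, via a sign-independence principle that I would isolate as a preliminary lemma. Suppose $A_1,\dots,A_n\in\slk$ and $w$ is a word in which each generator occurs with \emph{even} total exponent $e_i$, and suppose $w(A_1,\dots,A_n)=-I$. If $G=\langle\bar A_1,\dots,\bar A_n\rangle\le\pslk$ admitted a lift $\phi\colon G\to\slk$, then necessarily $\phi(\bar A_i)=\varepsilon_iA_i$ with $\varepsilon_i\in\{\pm1\}$. Since $\pi(w(A_1,\dots,A_n))=1$, the word $w$ is trivial in $G$, so $w(\varepsilon_1A_1,\dots,\varepsilon_nA_n)=\phi(1)=I$; but the even-exponent condition gives $w(\varepsilon_1A_1,\dots,\varepsilon_nA_n)=\big(\prod_i\varepsilon_i^{\,e_i}\big)\,w(A_1,\dots,A_n)=w(A_1,\dots,A_n)=-I$, forcing $I=-I$, impossible as $\chk\neq2$. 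Thus it suffices to exhibit matrices $A_i$ and an even word $w$ with $w(A_1,\dots,A_n)=-I$ for which $G$ has only unipotent torsion. The statement about orders then follows automatically, since unipotents are non-trivial and of infinite order when $\chk=0$ and of order $p$ when $\chk=p$.

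The real content lies in choosing the matrices, and the first thing I would record is why the obvious choices fail. The shortest even words that can be forced to equal $-I$ each create exactly the $2$-torsion we must avoid: $[A,B]=-I$ forces $\tr A=\tr B=0$ (so $\bar A,\bar B$ are involutions); $A^2B^2=-I$ collapses after a trace computation to a product with a trace-$0$ factor; and $(ABA)^2=-I$, realised by the standard unipotent generators $A=\left(\begin{smallmatrix}1&1\\0&1\end{smallmatrix}\right),\,B=\left(\begin{smallmatrix}1&0\\-1&1\end{smallmatrix}\right)$ of $\mathrm{SL}_2(\mathbb Z)$ (where $ABA$ is the Weyl element of trace $0$), makes $\overline{ABA}$ an involution. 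The structural reason is that a relation of ``genus one'' type reads, after setting it to $-I$, as ``some element is conjugate to $\pm$ its own inverse'', which pins a trace to $0$. The remedy is to spread $-I$ across a longer relation in which no single generator's trace is constrained.

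Concretely I would take $w=[a,b][c,d]$, which has even exponent sum in every generator and is not a proper power. Choosing $M\in\slk$ regular semisimple whose eigenvalues are not roots of unity, I would use that over an infinite field every element of $\slk$ is a single commutator to find $A,B,C,D$ with $[A,B]=M$ and $[C,D]=-M^{-1}$, so that $w(A,B,C,D)=-I$ while no trace is forced to vanish. The image $G=\langle\bar A,\bar B,\bar C,\bar D\rangle$ is then a representation of the closed genus-two surface group carrying the non-trivial $\slk$-obstruction, and the goal becomes to arrange it to be torsion-free (for $\chk=0$); for $\chk=p$ I would run the parallel construction with unipotent building blocks so that the only torsion produced is order-$p$ unipotent. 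To certify the torsion I would exploit that $\pslk$ acts on its Bruhat--Tits tree: choosing the entries so that the generators satisfy a ping-pong/Bass--Serre splitting with vertex and edge stabilisers that are trivial (resp.\ unipotent) ensures every elliptic element is conjugate into a unipotent subgroup. Note such $G$ is necessarily indiscrete, since Theorem~\ref{main} forbids discrete torsion-free non-lifting subgroups, and this indiscreteness is exactly what frees up the entries to make the construction work.

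The main obstacle is precisely this last certification: one must force an even word to equal $-I$ \emph{and} simultaneously guarantee the absence of any elliptic involution (any trace-$0$ element), and these pull in opposite directions because the condition $w=-I$ has a persistent tendency to factor through an order-$2$ element. My expectation is that the genus-two relator distributes $-I$ well enough to break this tendency, and that the remaining work is to verify, via the tree action, that no elliptic element of even order is created. To obtain the example over every admissible $K$ at once, I would realise the matrices over a number field and embed it into $K$: both the relation $w=-I$ and the distinction between finite and infinite order (membership of eigenvalues in the roots of unity) are algebraic conditions preserved under field embeddings, so a single characteristic-zero model transfers, and the characteristic-$p$ case is handled by the parallel unipotent construction.
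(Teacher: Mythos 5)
Your opening reduction is exactly the right starting point and matches the paper: an even word evaluating to $-I$ obstructs lifting independently of sign choices, the genus-two relator $[A,B][C,D]=-I$ is the relation used, and the whole difficulty is indeed to prevent the relation from forcing trace-zero (order-two) elements. But your mechanism for certifying the absence of $2$-torsion is not just incomplete — it is self-defeating. You correctly note that $G$ must be indiscrete (by \Cref{main}), yet you then propose to control torsion by a ping-pong/Bass--Serre argument on $T_K$ with trivial or unipotent vertex stabilisers. If the vertex stabilisers were trivial, $G$ would act freely on $T_K$, hence be free and discrete by \Cref{disc-stab}, contradicting indiscreteness (and free groups always lift). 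If they were unipotent with trivial edge groups, $G$ would be a free product of unipotent groups and a free group; each unipotent factor lifts to $\slk$ (take the trace-$2$ representatives), and a free product of liftable factors lifts by the universal property of the pushout. Either way the resulting $G$ would lift, so no group produced by your certification scheme can carry the obstruction. Your fallback of realising the matrices over a number field runs into a parallel problem: ruling out $2$-torsion means verifying $\tr w\neq 0$ for infinitely many words $w$, and over a number field all entries are algebraic, leaving no genericity to exploit.

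The paper's proof supplies precisely the missing mechanism. It works on the relation variety $S$ of quadruples satisfying $[A,B][C,D]=-I$ (with $A$ diagonal), and proves two things: first (\Cref{no-order-2}), for \emph{each} word $w$ there is some explicit point of $S$ at which $\pi w$ is not an involution — this is a finite case analysis on the parities of the exponent sums of $w$, using quaternion-plus-hyperbolic configurations; second (\Cref{dense-set}), using that local fields have infinite transcendence degree over their prime field (\Cref{infinite-trans-degree}) and that squares form an open set (\Cref{squares-open}), one can choose a point $y^*\in S\cap\slk^4$ whose entries are governed by seven algebraically independent parameters, so that any polynomial identity $\tr w(y^*)=0$ propagates to a dense subset of $S$ and, by continuity, to all of $S$ — contradicting the first lemma. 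The same genericity argument shows any finite-order $w(y^*)$ must have trace in $\{0,\pm2\}$, leaving only the unipotent case, which is exactly the torsion permitted by the statement. Without an argument of this "generic point of the relation variety" type (or some genuinely different substitute), your proposal establishes the non-lifting but not the torsion claim, which is where all the content of \Cref{no-lift} lies.
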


\begin{remark}
Throughout the paper we refer to elements of $\pslk$ as unipotent if one of their lifts is; in particular, if the lifts are $\{U,-U\}$ for some unipotent element $U$ of $\slk$.
\end{remark}

We conclude the paper by briefly discussing non-archimedean analogons of \cite[Propositions 1 and 2]{Button-add}. Given an abstract group $\Gamma$, we say that a representation $\rho:\Gamma\to \pslk$ {\it lifts} to a representation $\bar{\rho}:\Gamma\to \slk$ if $\rho =\pi\circ \bar{\rho}$. If $\rho$ is faithful, then this is equivalent to being able to lift $\rho(\Gamma)$. Note that if $\chk=2$, then every representation of a group into $\pslk$ lifts to $\slk$ since $\pi$ is the identity map.

\begin{prpx}\label{lift-rep}
Let $K$ be a non-archimedean local field. If $\Gamma$ is a group such that $H^2(\Gamma,\mathbb{Z}/2\mathbb{Z})=0$, then every representation $\rho$ of $\Gamma$ into $\pslk$ can be lifted to $\slk$.
\end{prpx}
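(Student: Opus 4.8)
The plan is to run the standard obstruction-theoretic argument for lifting a homomorphism along a central extension. If $\chk = 2$ then $\pi$ is the identity map and there is nothing to prove, so assume $\chk \neq 2$. In this case $\ker \pi = Z(\slk) = \{\pm I\} \cong \mathbb{Z}/2\mathbb{Z}$, so that
$$1 \longrightarrow \mathbb{Z}/2\mathbb{Z} \longrightarrow \slk \xrightarrow{\ \pi\ } \pslk \longrightarrow 1$$
is a central extension. The key point is that lifting a representation $\rho \colon \Gamma \to \pslk$ to $\slk$ is governed by a single obstruction class in $H^2(\Gamma, \mathbb{Z}/2\mathbb{Z})$, which vanishes by hypothesis.

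Concretely, I would first choose a set-theoretic section of $\pi$ along $\rho$: for each $g \in \Gamma$ pick some $\tilde\rho(g) \in \slk$ with $\pi(\tilde\rho(g)) = \rho(g)$, normalising $\tilde\rho(1) = I$. Since $\rho$ is a homomorphism, the element
$$c(g,h) := \tilde\rho(g)\,\tilde\rho(h)\,\tilde\rho(gh)^{-1}$$
lies in $\ker\pi = \{\pm I\} \cong \mathbb{Z}/2\mathbb{Z}$ for all $g,h \in \Gamma$. Because $-I$ is central in $\slk$, conjugation by $\tilde\rho(g)$ fixes the kernel, so $\Gamma$ acts trivially on $\mathbb{Z}/2\mathbb{Z}$; a direct computation using associativity of multiplication in $\slk$ then shows that $c$ satisfies the $2$-cocycle identity, hence defines a class $[c] \in H^2(\Gamma, \mathbb{Z}/2\mathbb{Z})$.

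Next I would observe that $[c]$ is exactly the obstruction to the existence of a homomorphic lift. Replacing $\tilde\rho$ by $g \mapsto \tilde\rho(g)\,\varepsilon(g)$ for a function $\varepsilon \colon \Gamma \to \{\pm I\}$ changes the cocycle by the coboundary of $\varepsilon$ (again using centrality of $\{\pm I\}$), and the modified section is a homomorphism precisely when this new cocycle is identically trivial. Thus a lift exists if and only if $[c] = 0$. Since $H^2(\Gamma, \mathbb{Z}/2\mathbb{Z}) = 0$ by hypothesis, we may choose $\varepsilon$ whose coboundary equals $c$, and then $\bar\rho(g) := \tilde\rho(g)\,\varepsilon(g)$ is the desired homomorphism lifting $\rho$.

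I do not expect any serious obstacle here: unlike Theorems~\ref{main} and \ref{no-lift}, this statement is purely algebraic, and the local field plays no role beyond the identification of $\ker\pi$ with $\mathbb{Z}/2\mathbb{Z}$ (which is valid over any field of characteristic $\neq 2$). The only points requiring genuine care are the verification of the cocycle identity and the bookkeeping showing that adjusting the section by a $1$-cochain changes $c$ by the corresponding coboundary; both are routine consequences of the fact that $\{\pm I\}$ is central.
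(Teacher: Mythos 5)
Your argument is correct and is essentially the paper's own proof in unrolled form: the paper pulls back the central extension $1 \to \mathbb{Z}/2\mathbb{Z} \to \slk \xrightarrow{\pi} \pslk \to 1$ along $\rho$ and invokes the classification of central extensions by $H^2(\Gamma;\mathbb{Z}/2\mathbb{Z})$ to split the pullback, which is exactly the cocycle computation you carry out explicitly. The only cosmetic difference is that you work with the obstruction cocycle directly rather than with the pullback group $\Gamma'$.
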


As is the case over $\mathbb{C}$, this condition is certainly not necessary:

\begin{prpx}\label{lift-rep-non-zero}
Let $K$ be a non-archimedean local field. If $n \geq 7$, or $n=6$ and $\chk \neq 3$, then every representation of the alternating group $A_n$ into $\pslk$ lifts to $\slk$, but $H^2(A_n,\mathbb{Z}/2\mathbb{Z})\neq 0$.
\end{prpx}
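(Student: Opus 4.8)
The plan is to separate the two assertions: the cohomological claim $H^2(A_n,\mathbb{Z}/2\mathbb{Z})\neq 0$, and the lifting claim. For the lifting claim I would observe that it holds for an essentially trivial reason, namely that under the stated hypotheses $A_n$ does not embed into $\pslk$ at all. Indeed, since $n\geq 6$ the group $A_n$ is simple, so any homomorphism $\rho\colon A_n\to\pslk$ has kernel $\{1\}$ or $A_n$; hence $\rho$ is either injective or trivial. The trivial representation lifts by sending every element to $I\in\slk$, so it suffices to rule out faithful $\rho$, i.e.\ to show that $A_n$ is not isomorphic to any (necessarily finite) subgroup of $\pslk$. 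Thus the lifting statement reduces entirely to a classification of the finite subgroups of $\pslk$, split into the cases $\chk=0$ and $\chk=p>0$.

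For the cohomology I would use that $A_n$ is perfect for $n\geq 5$, so $H_1(A_n,\mathbb{Z})=0$ and $H^1(A_n,\mathbb{Z}/2\mathbb{Z})=0$. The universal coefficient theorem then gives $H^2(A_n,\mathbb{Z}/2\mathbb{Z})\cong\operatorname{Hom}(H_2(A_n,\mathbb{Z}),\mathbb{Z}/2\mathbb{Z})$, the $\operatorname{Ext}$ term vanishing because $H_1(A_n,\mathbb{Z})=0$. Since the Schur multiplier $H_2(A_n,\mathbb{Z})$ is $\mathbb{Z}/2\mathbb{Z}$ for $n\geq 8$ and $\mathbb{Z}/6\mathbb{Z}$ for $n\in\{6,7\}$, it always has even order and so admits a nonzero homomorphism onto $\mathbb{Z}/2\mathbb{Z}$; hence $H^2(A_n,\mathbb{Z}/2\mathbb{Z})\neq 0$ in every relevant case.

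To rule out a faithful $\rho$ when $\chk=0$, I would note that the preimage $\pi^{-1}(\rho(A_n))$ is a finite subgroup of $\slk$ of order $2\,|A_n|$ (a central extension of $\rho(A_n)$ by $\{\pm I\}$), hence has a faithful two-dimensional representation over $K$. The matrix entries of a finite subgroup generate a finitely generated field of characteristic $0$, which embeds into $\mathbb{C}$; this realises $\rho(A_n)$ as a finite subgroup of $\pslc$. By the classical (Klein) classification, the finite subgroups of $\pslc$ are cyclic, dihedral, $A_4$, $S_4$ or $A_5$, none of which is isomorphic to $A_n$ for $n\geq 6$. So no faithful $\rho$ exists and every representation is trivial, covering all of the characteristic-zero cases of the hypothesis.

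The positive-characteristic case is the crux and the main obstacle. Here I would let $\chk=p$ and use the action of $\pslk$ on its Bruhat--Tits tree: a finite subgroup has a bounded orbit, hence fixes a vertex, so $\rho(A_n)$ is conjugate into a vertex stabiliser, which we may take to be $\psl(\oo)$. Reduction modulo the maximal ideal of $\oo$ yields a homomorphism onto a subgroup of $\psl(\fq)$ whose kernel is a pro-$p$ group; since $\rho(A_n)\cong A_n$ is simple and not a $p$-group, its intersection with this kernel is a normal $p$-subgroup and therefore trivial, so $A_n$ embeds into $\psl(\fq)\subseteq\psl(\overline{\mathbb{F}}_p)$ (note that simplicity forces the image into $\psl$ even if the stabiliser surjects onto $\pgl(\fq)$). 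Dickson's classification of the finite subgroups of $\psl(\overline{\mathbb{F}}_p)$ then shows that the only alternating groups occurring are $A_4$ and $A_5$ (for all $p$) together with $A_6\cong\psl(\mathbb{F}_9)$, which arises only when $p=3$. Consequently $A_n$ for $n\geq 7$ never embeds, and $A_6$ embeds only if $\chk=3$, matching the stated hypotheses exactly; the points requiring most care are the tree fixed-point and reduction step and the correct invocation of Dickson's list. Combining the two characteristic cases shows that under the stated hypotheses every $\rho\colon A_n\to\pslk$ is trivial and hence lifts, which together with the cohomology computation completes the proof.
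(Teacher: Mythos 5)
Your proposal is correct, and its skeleton matches the paper's: simplicity of $A_n$ reduces the lifting claim to showing $A_n$ does not embed in $\pslk$ (so every representation is trivial and lifts), while $H^2(A_n,\mathbb{Z}/2\mathbb{Z})\neq 0$ follows from the Schur multiplier together with the Universal Coefficient Theorem. The one genuine difference is how the non-embeddability is sourced. The paper cites the classification of finite subgroups of $\pslk$ over an arbitrary field (Valentini--Madan, \cite[Theorem 1]{VM}) and observes that simplicity rules out every listed family except $\mathrm{PSL}_2(q')$, which only yields $A_6\cong \mathrm{PSL}_2(9)$ in characteristic $3$. You instead re-derive the needed cases: in characteristic zero by embedding the finitely generated field of matrix entries into $\mathbb{C}$ and invoking Klein's list of finite subgroups of $\pslc$, and in characteristic $p$ by using a fixed vertex on the Bruhat--Tits tree, reduction modulo the maximal ideal with pro-$p$ congruence kernel (trivial intersection with the simple non-$p$-group $A_n$), and Dickson's classification over $\overline{\mathbb{F}}_p$. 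Your route is longer but more self-contained, and the tree-plus-reduction step is exactly the mechanism underlying the result you would otherwise cite; the paper's route is shorter because \cite{VM} packages all of this into one statement. Both arguments ultimately rely on the same small fact that no $A_n$ with $n\geq 7$ is isomorphic to any $\mathrm{PSL}_2(q)$ or $\mathrm{PGL}_2(q)$, which you should state explicitly (the paper attributes it to \cite[page 3]{Wilson}); otherwise your invocation of ``Dickson's list'' silently absorbs it.
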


\subsection*{Acknowledgements}
We thank Sam Hughes for cohomological advice.
This work has received funding from the European Research Council (ERC) under the European Union's Horizon 2020 research and innovation programme (Grant agreement No. 850930). It has also received funding from the New Zealand Marsden Fund (grant numbers 3723563 and 3725880) and the Rutherford Foundation (grant number 3726956). The first author would like to thank the University of Auckland for its hospitality during a visit in September 2023. 
\section{Background}

Let $K$ be a non-archimedean local field with finite residue field of order $q=p^r$. We denote the valuation ring of $K$ by $\mathcal{O}_K$, the corresponding valuation map by $v$, and we let $u$ be a uniformiser of $K$.
We use $T_K$ to denote the corresponding Bruhat--Tits tree, upon which $\pslk$ (and hence $\slk$) acts by isometries, without inversion. Each element of $\pslk$ (and their representatives in $\slk$) can therefore be classified as either elliptic or hyperbolic, depending on its translation length on $T_K$. We will use $\fix(g)$ to denote the fixed point set of an elliptic element $g$, and $\ax(h)$ to denote the translation axis of a hyperbolic element $h$. We respectively refer to \cite{Cassels} and \cite{Serre} for more information on non-archimedean local fields and the Bruhat--Tits tree.

\subsection{Action of $\pslk$ on the Bruhat--Tits tree}\label{BT-tree}

We begin by recalling properties of stabilisers and fixed point sets for the action of $\pslk$ on the Bruhat--Tits tree $T_K$.

\begin{lem}{\cite[Lemma 4.4.1]{K}}\label{disc-stab}
Let $G$ be a subgroup of $\pslk$.
\begin{enumerate}[label={$(\arabic*)$}]
\item If $G$ is discrete, then ${\rm Stab}_G(y)$ is finite for every vertex $y$ of $T_K$.
\item If ${\rm Stab}_G(y)$ is finite for some vertex $y$ of $T_K$, then $G$ is discrete.
\end{enumerate}
\end{lem}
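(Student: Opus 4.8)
The plan is to reduce both implications to point-set topology, using as the single key input that the vertex stabilisers for the action of $\pslk$ on $T_K$ are compact open subgroups. Recall that $\pslk$ is a Hausdorff totally disconnected locally compact group (the quotient by $\{\pm I\}$ of the local-field topology on $\slk$), and that for the standard vertex (the homothety class of $\oo^2$) the stabiliser $\stab_{\pslk}(y)$ is the image of $\mathrm{SL}_2(\oo)$: this is open because $\oo$ is open in $K$ and compact because $\oo$ is compact, and every other vertex stabiliser is a conjugate of this one. I would record this as a preliminary observation before proving either part.

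For $(1)$, suppose $G$ is discrete. Since $\pslk$ is Hausdorff, a discrete subgroup is closed, so $G$ is closed in $\pslk$. Hence $\stab_G(y) = G \cap \stab_{\pslk}(y)$ is closed in the compact set $\stab_{\pslk}(y)$, and therefore compact; being also discrete (as a subset of the discrete group $G$), it is finite.

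For $(2)$, suppose $\stab_G(y)$ is finite for some vertex $y$. It suffices to produce an open neighbourhood $U$ of the identity in $\pslk$ with $U \cap G = \{e\}$. First I would choose any open neighbourhood of $e$ contained in $\stab_{\pslk}(y)$, so that its intersection with $G$ lies inside the finite set $\stab_G(y)$. Then, using that $\pslk$ is Hausdorff, I would shrink it to an open $U \subseteq \stab_{\pslk}(y)$ separating $e$ from the finitely many remaining non-identity elements of $\stab_G(y)$; for such a $U$ one has $U \cap G = U \cap \stab_G(y) = \{e\}$, so $G$ is discrete.

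The only genuine content is the preliminary observation, namely that the vertex stabilisers are compact and open; once this is in hand, both implications are routine. The point to be careful about is invoking the correct topology on $\pslk$ and confirming that passage to the quotient by $\{\pm I\}$ preserves compactness and openness of the stabiliser, which holds because $\pi$ is a continuous open map with finite kernel.
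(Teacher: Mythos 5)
The paper offers no proof of this lemma: it is quoted directly from Kato \cite[Lemma 4.4.1]{K}, so there is no in-text argument to compare yours against. Your proposal is correct and is the standard argument: once one knows that vertex stabilisers in $\pslk$ are compact open subgroups, part $(1)$ is the observation that a discrete closed subset of a compact set is finite (using that a discrete subgroup of a Hausdorff group is closed), and part $(2)$ is the construction of an identity neighbourhood inside the open stabiliser meeting $G$ only in the finite set $\stab_G(y)$, which can then be shrunk to meet $G$ trivially. One small imprecision in your preliminary observation: $\pslk$ does \emph{not} act transitively on the vertices of $T_K$ (it preserves the two types of the bipartite structure), so not every vertex stabiliser is a $\pslk$-conjugate of the stabiliser of the standard vertex $[\oo^2]$. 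The stabilisers of vertices of the other type are conjugates by elements of $\pgl(K)$ (equivalently, of ${\rm GL_2}(K)$); since $\pslk$ is normal there and conjugation is a homeomorphism, every vertex stabiliser is still compact and open, and with that adjustment your argument goes through as written.
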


Using the classification of finite subgroups of $\pslk$ given in \cite[Theorem 1]{VM}, we obtain:

\begin{prop}\label{finite}
If $G$ is a finite subgroup of $\pslk$ with no $2$-torsion, then either:
\begin{itemize}
\item $G$ is cyclic of odd order, or
\item $G$ is a semi-direct product of an elementary abelian $p$-group with a cyclic group of order dividing $q-1$, and $\chk=p\neq 2$;
\end{itemize}
\end{prop}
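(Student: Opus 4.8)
The strategy is to read off \cite[Theorem 1]{VM} and then discard every isomorphism class on the list that is forced to contain an element of order two. Recall that this classification presents the finite subgroups of $\pslk$ as the Dickson-type families: cyclic groups, dihedral groups, the exceptional groups $A_4$, $S_4$ and $A_5$, the groups $\psl(\fq)$ and $\pgl(\fq)$ together with their analogues over subfields of the residue field (occurring only when $\chk=p>0$), and the subgroups of a Borel subgroup, that is, semidirect products $E\rtimes C$ of an elementary abelian $p$-group $E$ by a finite cyclic group $C$ (again only when $\chk=p>0$).

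First I would remove the families that unconditionally contain an involution. Every nontrivial dihedral group contains a reflection of order two; each of $A_4$, $S_4$ and $A_5$ contains an involution; and each of $\psl(\fq)$, $\pgl(\fq)$ and their subfield analogues contains a dihedral, hence order-two, subgroup. The hypothesis that $G$ has no $2$-torsion therefore excludes all of these simultaneously, and among the cyclic groups it retains precisely those of odd order. This is the first bullet.

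It remains to examine the surviving Borel-type groups $E\rtimes C$, where the characteristic of $K$ does the work. A nontrivial element of $E$ is unipotent, so it has order $\chk$ when $\chk=p>0$ and infinite order when $\chk=0$; since $G$ is finite, this forces $E=1$ unless $\chk=p>0$. If moreover $p=2$, then the nontrivial unipotents are themselves involutions, so the no-$2$-torsion hypothesis again forces $E=1$. Hence a nontrivial $E$ can occur only when $\chk=p\neq 2$, and in that case $E$ is elementary abelian of exponent $p$ and has no $2$-torsion. The complement $C$ is a finite cyclic subgroup of the image of a split torus, i.e.\ of $K^{\times}/\{\pm 1\}$; identifying the torsion of $K^{\times}$ (in characteristic $p$) with the $(q-1)$st roots of unity coming from the residue field shows $|C|$ divides $q-1$. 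Finally, an involution in $C$ would be an involution of $G$, so the hypothesis forces $|C|$ to be odd. When $E=1$ this case collapses into a cyclic group of odd order, and when $E\neq 1$ it is exactly the second bullet.

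The one place I would take care is the bookkeeping for the torus factor $C$: one has to confirm both that every finite cyclic subgroup of $K^{\times}/\{\pm 1\}$ has order dividing $q-1$ (via the roots of unity of $K$ in characteristic $p$) and that it is precisely the no-$2$-torsion hypothesis that removes the even-order possibilities from this family. The remaining steps are a routine inspection of the classification list.
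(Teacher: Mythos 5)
Your argument is correct and is exactly the route the paper takes: the paper derives the proposition directly from the classification in \cite[Theorem 1]{VM} (stating no further proof), and your case analysis simply writes out the inspection of that list --- discarding the dihedral, exceptional and $\psl/\pgl$ families via their involutions, and handling the Borel-type groups via the order of unipotents and the roots of unity of $K$. Your extra observations (that $|C|$ must in fact be odd, and that the second case degenerates to the first when $E=1$) are consistent refinements rather than discrepancies.
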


Following \cite[Section 2.4]{Vanderput}, we say  that a group satisfying the second condition is of {\it Borel type}, as it can be represented in $\slk$ by a group of upper triangular matrices. Since the boundary of $T_K$ can be identified with the projective line $\mathbb{P}^1(K)$ \cite[p. 72]{Serre}, it follows that a group of Borel type necessarily fixes an end of $T_K$.

As a direct consequence of \Cref{disc-stab} and \Cref{finite}, we obtain the following.

\begin{coro}\label{all-cyclic}
Let $G$ be a discrete subgroup of $\pslk$ with no $2$-torsion. If $\chk \in \{0,2\}$, then ${\rm Stab}_G(y)$ is a finite cyclic group for every vertex $y$ of $T_K$.
\end{coro}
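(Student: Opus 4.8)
The plan is to assemble the two quoted results and then eliminate the Borel-type possibility using the hypothesis on the characteristic; the statement should follow essentially immediately once this elimination is made.

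First I would fix an arbitrary vertex $y$ of $T_K$ and set $H := {\rm Stab}_G(y)$. Since $G$ is discrete, \Cref{disc-stab}(1) gives at once that $H$ is finite. Moreover $H$ is a subgroup of $G$, so it inherits the hypothesis of having no $2$-torsion. Thus $H$ is a finite subgroup of $\pslk$ without $2$-torsion, which is exactly the situation governed by \Cref{finite}.

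Applying \Cref{finite} to $H$, I obtain a dichotomy: either $H$ is cyclic of odd order, or $H$ is of Borel type, in which case necessarily $\chk = p \neq 2$ for some prime $p$. The key observation is that the second alternative is incompatible with the standing assumption $\chk \in \{0,2\}$: if $\chk = 0$ there is no such prime $p$, while if $\chk = 2$ the Borel-type case would force $p = 2$, contradicting the requirement $p \neq 2$. Hence the second alternative cannot occur.

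Consequently only the first alternative survives, so $H = {\rm Stab}_G(y)$ is cyclic of odd order, and in particular a finite cyclic group, which is what was to be shown. I do not anticipate a genuine obstacle here, since the corollary is a direct consequence of \Cref{disc-stab} and \Cref{finite}; the only point requiring care is the verification that the characteristic hypothesis rules out the Borel-type branch of \Cref{finite}, after which the conclusion is immediate.
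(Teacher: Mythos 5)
Your proposal is correct and is exactly the argument the paper intends: the paper states the corollary as a direct consequence of \Cref{disc-stab} and \Cref{finite}, and your elimination of the Borel-type branch via the condition $\chk = p \neq 2$ is precisely the point that makes the hypothesis $\chk \in \{0,2\}$ do its work. Nothing is missing.
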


As in \cite{BC}, we define a {\it band} of radius $r$ and {\it nerve} $S$ to be the subtree of $T_K$ consisting of all vertices at distance at most $r$ from some geodesic $S$. Given an infinite ray $R$ emanating from a vertex $x$ of $T_K$, we define a {\it horoball} to be the subtree of $T_K$ consisting of all vertices at distance at most $d(v,x)$ from each vertex $v$ of $R$.

\begin{prop}\label{fix-classes}
If $g\in \pslk$ is non-trivial with finite order $n$, then either
\begin{itemize}
\item $\fix(g)$ is a band with nerve $S$, which is either a vertex, an edge or a bi-infinite ray. Its radius, and the type of $S$, depend only on $n$ and $K$; or
\item $\fix(g)$ is a horoball and $\chk=p=n$. 
\end{itemize}
\end{prop}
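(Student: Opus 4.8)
The plan is to exploit that a nontrivial finite-order element of $\pslk$ is elliptic, hence fixes a vertex of $T_K$, and that conjugation acts isometrically on fixed-point sets: since $\fix(hgh^{-1}) = h\cdot\fix(g)$, the isometry type of $\fix(g)$ (its nerve, radius, or horoball shape) is a conjugacy invariant. I would therefore begin by reducing $g$ to a normal form. Lifting $g$ to $\slk$ and invoking the Jordan decomposition, a finite-order element is, up to conjugacy, of one of three types: (i) \emph{split semisimple}, conjugate to a diagonal matrix $\mathrm{diag}(\lambda,\lambda^{-1})$ with $\lambda\in K^{\times}$ a root of unity; (ii) \emph{non-split semisimple}, with eigenvalues $\lambda,\lambda^{-1}$ a pair of Galois-conjugate roots of unity generating a quadratic extension $L=K(\lambda)$; or (iii) \emph{unipotent}. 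A short argument with the Jordan decomposition in $\slk$ shows that a nontrivial unipotent survives in $\pslk$ only when $\chk=p$, in which case it has order exactly $p$ (since any commuting semisimple part would have to be scalar, hence trivial in $\pslk$); this already pins the horoball case to $\chk=p=n$.

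For the semisimple cases I would compute $\fix(g)$ directly from the lattice description of $T_K$, using that a vertex $[h\oo^2]$ is fixed precisely when $h^{-1}gh$ can be taken to have integral entries. In the split case $g$ fixes the two ends determined by its eigenlines, hence the bi-infinite geodesic $S$ between them; evaluating $h^{-1}gh$ on upper-triangular representatives $h=\left(\begin{smallmatrix}\alpha & \beta\\ 0 & 1\end{smallmatrix}\right)$ produces the off-diagonal entry $\alpha^{-1}\beta(\lambda-\lambda^{-1})$, so the fixed-vertex condition becomes $v(\alpha)-v(\beta)\le v(\lambda-\lambda^{-1})$. Since $v(\alpha)-v(\beta)$ measures the distance of the vertex from $S$, this exhibits $\fix(g)$ as a band with nerve $S$ and radius $v(\lambda-\lambda^{-1})$, which is $0$ exactly when the reduction of $g$ is non-scalar; as $g$ commutes with the diagonal torus translating along $S$, the width is constant and the set is genuinely a band. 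In the non-split case $g$ fixes no end of $T_K$ (its eigenlines are not $K$-rational), so $\fix(g)$ is bounded; I would pass to $T_L$, where $g$ fixes a geodesic $\gamma$, and use that the Galois involution of $L/K$ swaps the two eigen-ends and hence reverses $\gamma$. Its fixed midpoint lies in $T_K$ and is a vertex when $L/K$ is unramified and the midpoint of an edge when $L/K$ is ramified, yielding nerve a vertex or an edge respectively, again with radius controlled by the reduction depth of $g$.

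For the unipotent case (iii), with $g=\left(\begin{smallmatrix}1 & c\\0 & 1\end{smallmatrix}\right)$, the same integrality computation gives $h^{-1}gh=\left(\begin{smallmatrix}1 & c\alpha^{-1}\\0 & 1\end{smallmatrix}\right)$ for a representative $h=\left(\begin{smallmatrix}\alpha & \beta\\0 & 1\end{smallmatrix}\right)$, so the vertex is fixed exactly when $v(\alpha)\le v(c)$. This describes precisely a horoball centred at the fixed end, with boundary horosphere at level $v(c)$. Since $g^i=\left(\begin{smallmatrix}1 & ic\\0 & 1\end{smallmatrix}\right)$ and $i\in\{1,\dots,p-1\}$ is a unit, $v(ic)=v(c)$, so $g$ and each $g^i$ fix the same horoball, giving $\fix(g)=\fix(g^i)$.

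The main obstacle I expect is the uniformity claim that the radius and nerve type depend only on $n$ and $K$, and not on the chosen conjugacy representative. This reduces to showing that the governing invariants, namely $v(\lambda-\lambda^{-1})$ in the split case and the ramification of $K(\lambda)/K$ in the non-split case, are the same for every root of unity $\lambda$ realising a given order $n$ in $\slk$; this rests on the fact that the relevant primitive roots of unity are permuted by a Galois group preserving the valuation. A secondary technical point is the case of residue characteristic $2$, where a nonzero value of $v(2)$ perturbs the numerical radius and must be tracked carefully throughout the valuation estimates.
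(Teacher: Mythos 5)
Your proof is correct and follows essentially the same route as the paper's: lift $g$ to $\slk$, split into the unipotent, $K$-diagonalisable, and non-split semisimple cases, and identify $\fix(g)$ as a horoball, a band about the eigen-geodesic, or a band about a vertex/midpoint of an edge, respectively. The only real difference is that you carry out the lattice computations (and the uniformity-in-$n$ argument) explicitly, where the paper simply delegates these to Bella{\"\i}che--Chenevier (Propositions 18 and 20 of \cite{BC}).
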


\begin{proof}
Let $\overline{g}$ be a representative of $g$ in $\slk$, and note that $\fix(g)=\fix(\overline{g})$. If $\overline{g}$ is unipotent, then we may conjugate it into the form
$$
\pm \left[ \begin{array}{cc}
1 & x \\ 
0 & 1 \end{array} \right],
$$
and $\fix(g)$ is the horoball defined by the ray $\mathcal{O}_Ke_1+u^{-v(x)+n}\mathcal{O}_Ke_2$ for $n \ge 0$ \cite[Proposition 20]{BC}. Since $g$ has finite order, $K$ must have characteristic $p$, whence $g$ has order $p$. 
We may assume $\overline{g}$ is diagonalisable over either $K$ or a quadratic extension $K'$ of $K$. In the first case, $\fix(g)$ is a band with nerve the unique bi-infinite ray between the ends of $T_K$ determined by the eigenvectors of $\overline{g}$. In the latter case, we consider the bi-infinite ray $S'$ between the ends of the Bruhat--Tits tree $T_{K'}$ corresponding to $K'$ determined by the eigenvectors of $\overline{g}$. The intersection $S' \cap T_K$ is either a vertex or the midpoint of an edge, and that vertex/edge is the nerve of $\fix(g)$; see \cite[Proposition 18]{BC} for further details. 
\end{proof}

\begin{coro}\label{preserve-nerve}
If $g,h\in \pslk$ are finite order elements which commute, then either:
\begin{itemize}
\item $\fix(g)$ and $\fix(h)$ are both bands with the same nerve; or
\item $\fix(g)$ and $\fix(h)$ are both horoballs with the same fixed end.
\end{itemize}
In particular, either $\fix(g) \subseteq \fix(h)$ or $\fix(h) \subseteq \fix(g)$.
\end{coro}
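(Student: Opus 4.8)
The plan is to pass to lifts $\bar g, \bar h \in \slk$ and to track everything through their eigen-lines, since the proof of \Cref{fix-classes} shows that the nerve of a band (the pair of eigen-ends when $\bar g$ is diagonalisable over $K$, or the vertex/edge $S'\cap T_K$ cut out by the conjugate eigen-lines over a quadratic extension $K'$) and the fixed end of a horoball (the unique eigen-line of a unipotent element) are completely determined by the eigenspaces of $\bar g$ in $\mathbb{P}^1$ over $K$ or $K'$. These eigen-lines are unchanged under $\bar g \mapsto -\bar g$, so they are genuine invariants of $g \in \pslk$, and the whole statement reduces to comparing the eigenspaces of $\bar g$ and $\bar h$.

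First I would reduce to the case of commuting lifts. Since $g$ and $h$ commute in $\pslk$, we have $\bar g \bar h = \pm \bar h \bar g$ in $\slk$. In the $+$ case $\bar g$ and $\bar h$ commute and I can proceed directly. The $-$ case is the main obstacle: $\bar g \bar h \bar g^{-1} = -\bar h$ forces $\bar h$ to be conjugate to $-\bar h$, hence $\tr \bar h = 0$, and symmetrically $\tr \bar g = 0$, so both $g$ and $h$ are involutions and $\langle \bar g, \bar h\rangle$ is a quaternion group. This genuinely quaternionic configuration is exactly where commuting in $\pslk$ fails to lift to commuting in $\slk$, and one can check (e.g.\ for $\bar g = \left[\begin{smallmatrix}0&1\\-1&0\end{smallmatrix}\right]$ and $\bar h = \left[\begin{smallmatrix}i&0\\0&-i\end{smallmatrix}\right]$ with $i^2=-1$) that the two bands then have distinct nerves; so this case must be excluded, which it is by the no-$2$-torsion hypothesis standing throughout the paper. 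Hence I may assume $\bar g \bar h = \bar h \bar g$.

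With commuting lifts, commuting matrices preserve each other's eigenspaces, and I would finish by a short case analysis. If $\bar h$ is unipotent, then $\bar g$ preserves its unique fixed line $\xi$, so $\xi$ is an eigen-line of $\bar g$; since a non-trivial unipotent fixes a unique end while, were $\bar g$ semisimple, its second eigen-line would (as $\bar h$ preserves it) give $\bar h$ a second fixed end, a contradiction, it follows that $\bar g$ is also unipotent with the same fixed line $\xi$. This simultaneously rules out the mixed band/horoball case and yields the second bullet. If instead both are semisimple, then being non-central they have distinct eigenvalues, so the shared invariant lines force $\bar g$ and $\bar h$ to be simultaneously diagonalisable with the same pair of eigen-lines; hence they are both split, or both non-split over the same $K'$, and have the same nerve. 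Finally, the ``in particular'' clause is immediate: two bands with a common nerve are nested according to their radii, and two horoballs with a common fixed end are totally ordered by inclusion, so in either case one fixed set contains the other.
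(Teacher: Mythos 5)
Your proof is correct and follows essentially the same route as the paper's --- pass to lifts in $\slk$, use the fact that commuting matrices preserve one another's eigenspaces, and read the nerve or the fixed end off the eigen-lines as in the proof of \Cref{fix-classes} --- but you make explicit a point the paper's proof elides. Commutation in $\pslk$ only gives $\bar{g}\bar{h}=\pm\bar{h}\bar{g}$ in $\slk$, and the paper's assertion that commuting diagonalisable elements ``are simultaneously diagonalisable and have the same eigenvectors'' tacitly assumes the $+$ sign. Your analysis of the $-$ sign is right: it forces $g$ and $h$ to be commuting involutions with anticommuting lifts generating $Q_8$, and then the two eigen-line pairs are disjoint, so the fixed sets are bands with distinct nerves and the corollary's conclusion genuinely fails (your example exhibits this whenever $\sqrt{-1}\in K$). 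So the statement needs either the exclusion of involutions, which you import from the paper's standing no-$2$-torsion hypothesis, or the hypothesis that $g,h$ admit commuting lifts; the latter holds in every application in the paper, since \Cref{key-collapse-gen} only ever invokes the corollary for powers of a single elliptic element or for elements of a common cyclic subgroup. The rest of your argument --- the unipotent versus semisimple dichotomy and the nesting of bands with a common nerve and of horoballs with a common end --- is sound.
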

\begin{proof}
If $g$ is represented by a unipotent element in $\slk$, then the representatives of $g$ and $h$ in $\slk$ can be simultaneously conjugated into upper triangular unipotent matrices with top right entries $x_g$ and $x_h$ respectively. The proof of \Cref{fix-classes} shows that $\fix(g)$ and $\fix(h)$ are both horoballs, and that $\fix(g) \subseteq \fix(h)$ if $v(x_g) \le v(x_h)$ and $\fix(h) \subseteq \fix(g)$ otherwise.

On the other hand, if $g$ is diagonalisable over $K$ or a quadratic extension $K'$ of $K$, then $g$ and $h$ are simultaneously diagonalisable and have the same eigenvectors. Thus the proof of \Cref{fix-classes} shows that $\fix(g)$ and $\fix(h)$ have the same nerve, and the result follows.
\end{proof}

Given an elliptic element $g \in \pslk$, let $F(g) = \bigcup_{i\in \mathbb{Z}} \set{\fix(g^i) \given g^i \neq 1}$.

\begin{lem}\label{key-collapse-gen}
Suppose $\chk = 0$. Let $G$ be a discrete subgroup of $\pslk$ with no $2$-torsion. If $g,h \in G$ are elliptic, then either $F(g)\cap F(h)=\varnothing$, $\fix(g)\subseteq \fix(h)$ or $\fix(h)\subseteq \fix(g)$.
\end{lem}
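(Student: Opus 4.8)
The plan is to convert this geometric dichotomy into the algebraic statement that vertex stabilisers are finite cyclic (\Cref{all-cyclic}), and then feed the resulting commuting elements into the nerve-matching result \Cref{preserve-nerve}. First I would pin down the structure of the sets involved. Since $G$ is discrete and $g,h$ are elliptic, each fixes a point of $T_K$, so some power fixes a vertex; as that vertex stabiliser is finite, $g$ and $h$ have finite order. Because $\chk=0$, \Cref{fix-classes} excludes the horoball case, so every nontrivial power $g^i$ has $\fix(g^i)$ equal to a band. Applying \Cref{preserve-nerve} to the commuting pair $g,g^i$ shows that all the bands $\fix(g^i)$ with $g^i\neq 1$ share a single nerve $S_g$; hence $F(g)$ is itself a band with nerve $S_g$, and similarly $F(h)$ is a band with nerve $S_h$.

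Now suppose $F(g)\cap F(h)\neq\varnothing$; I must produce the nesting. By definition of $F$ there exist $i,j$ with $\fix(g^i)\cap\fix(h^j)\neq\varnothing$, so $g^i$ and $h^j$ fix a common point $p$ of $T_K$. I would then pass to $g^{2i}$ and $h^{2j}$: these remain nontrivial because $G$ has no $2$-torsion (so $g^i\neq 1$ forces $g^{2i}\neq 1$), and they fix a common vertex $v$ — namely $p$ itself if $p$ is a vertex, or an endpoint of the edge $p$ bisects, since the square of an isometry fixing an edge midpoint fixes both endpoints. Thus $g^{2i},h^{2j}\in\stab_G(v)$, which is finite cyclic by \Cref{all-cyclic}, and in particular $g^{2i}$ and $h^{2j}$ commute.

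Having two commuting nontrivial elliptic elements, \Cref{preserve-nerve} forces $\fix(g^{2i})$ and $\fix(h^{2j})$ to be bands with the same nerve. But by the first paragraph $\fix(g^{2i})$ has nerve $S_g$ and $\fix(h^{2j})$ has nerve $S_h$, whence $S_g=S_h=:S$. Consequently $\fix(g)$ and $\fix(h)$ are bands sharing the very same nerve $S$, and two such concentric bands are automatically nested, the one of smaller radius lying inside the other; this yields $\fix(g)\subseteq\fix(h)$ or $\fix(h)\subseteq\fix(g)$, as required.

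I expect the only real obstacle to be the bookkeeping at the common fixed point $p$: arranging that one genuinely lands on a fixed \emph{vertex} (so that \Cref{all-cyclic} is applicable) without the relevant powers collapsing to the identity. This is precisely where the no-$2$-torsion hypothesis does its work, both in guaranteeing that the vertex stabiliser is cyclic and in keeping the squares $g^{2i},h^{2j}$ nontrivial. Everything else is soft: once commuting elliptics are known to share a nerve, the geometric conclusion about concentric bands is immediate.
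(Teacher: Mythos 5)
Your argument is correct and essentially the same as the paper's: both reduce a nonempty intersection $F(g)\cap F(h)$ to a common fixed vertex, invoke \Cref{all-cyclic} to place the relevant powers in a common finite cyclic stabiliser so that they commute, and then apply \Cref{preserve-nerve} twice (once to the pair of powers, once to relate each power back to $g$ and $h$) to conclude that $\fix(g)$ and $\fix(h)$ are concentric bands and hence nested. The only divergence is your passage to $g^{2i},h^{2j}$ to guard against a common fixed point being an edge midpoint; this is harmless but unnecessary, since $\pslk$ acts on $T_K$ without inversions and the fixed point sets are subtrees of vertices, so a nonempty intersection already contains a vertex.
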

\begin{proof}
First note that $g,h$ have finite order by \Cref{disc-stab}. Suppose that $F(g)\cap F(h) \neq \varnothing$, so that there is some vertex $v$ of $T_K$ which is fixed by some non-trivial powers $g^i$ and $h^j$. By \Cref{all-cyclic}, $g^i$ and $h^j$ lie in a common cyclic subgroup of $\pslk$ and hence commute. Thus \Cref{preserve-nerve} shows that $\fix(g^i)$ and $\fix(h^j)$ are both bands with the same nerve. Another application of \Cref{preserve-nerve} shows that $\fix(g)$ and $\fix(g^i)$ are also both bands with the same nerve, and similarly for $\fix(h)$ and $\fix(h^j)$. Hence $\fix(g)$ and $\fix(h)$ are bands with the same nerve, so either $\fix(g)\subseteq \fix(h)$ or $\fix(h)\subseteq \fix(g)$.
\end{proof}

\subsection{Bass--Serre theory}\label{Bass-Serre}

Let $Y$ be a directed graph. For each edge $e$ of $Y$, define $\overline{e}$ to be the reverse of $e$ and $i(e)$ to be the initial vertex of $e$. We recall that a {\it graph of groups} $\G = (G, Y)$ consists of the following:

\begin{itemize}
  \item For each vertex $v \in V(Y)$, a group $G_v$. If $T$ is a spanning tree of $Y$, then $G$ is isomorphic to the fundamental group $\pi_1(\G, T)$. Also note that $G_v \cong G_w$ for all $w \in V(X)$ representing $v \in V(Y)$; see \cite[I.5]{Serre} for details. This is the situation we consider below.
  \item For each edge $e \in E(Y)$, a group $G_e$ and an injective map $\sigma_e \colon G_e \to G_{i(e)}$, with the requirement that $G_e = G_{\overline{e}}$.
\end{itemize}

Let $T$ be a spanning tree of $Y$. Recall that the {\it fundamental group} $\pi_1(\G, T)$ is the group generated by $\set{G_v \given v \in V(Y)}$ and a set of symbols $\set{t_e \given e \in E(Y)}$ satisfying the following relations:

\begin{itemize}
  \item $t_{\overline{e}} = t_e^{-1}$.
  \item If $e \in E(T)$, then $t_e = 1$.
  \item $t_e \sigma_e(g) t_e^{-1} = \sigma_{\overline{e}}(g)$ for all $g \in G_e$.
\end{itemize}

If $G$ is a group which acts without inversions on a tree $X$, and $Y$ is the quotient graph $G \backslash X$, then we naturally obtain a quotient graph of groups $\G = (G, Y)$ by considering the vertex and edge stabilisers of $X$ in $G$. To make this precise, we choose a subtree $S$ of $X$ that is a lift of a maximal tree $T$ of $Y$, and take as orbit representatives the vertices and edges of $S$, adding an edge for every orbit $e$ not yet accounted for so that $\overline{e}$ is represented by the reverse of the edge representing $e$, and so that one endpoint lies in $S$. As vertex and edge groups take the stabilisers of the orbit representatives; if an edge is in $T$ then it is naturally a subgroup of its adjacent vertices and we take these embeddings. Otherwise, (fixing the orientation of $e$ so that $i(\overline{e})$ lies in $S$) take $\sigma_{\overline{e}}$ to be the natural embedding between the stabilisers, and choose a conjugating element $g_e$ bringing $i(e)$ into $S$. Then let the embedding $\sigma_e$ be the inclusion of stabilisers followed by conjugation by $g_e$.

One can define the Bass--Serre tree for a graph of groups, which admits an action of the fundamental group $\pi_1(\G,T)$. The utility of these constructions follows from the structure theorem of Bass--Serre theory:
\begin{theorem}[{\cite[Section 5.4, Theorem 13]{Serre}}]\label{B-S}
	Up to isomorphism of the structures concerned, the construction of the quotient graph of groups from an action on a tree is mutually inverse to the construction of the Bass--Serre tree and fundamental group.
\end{theorem}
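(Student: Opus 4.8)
The plan is to verify that the two constructions described above are inverse to one another, which amounts to checking that each composite returns the original structure up to the relevant isomorphism. Fix notation: let $\mathbb{G} = (G,Y)$ be a graph of groups with spanning tree $T$ and fundamental group $\pi = \pi_1(\mathbb{G},T)$, and let $\Gamma$ be a group acting without inversions on a tree $X$, giving rise (as in the paragraph preceding the statement) to a quotient graph of groups. I would treat the two directions separately.

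For the first direction, I would construct the Bass--Serre tree $\widetilde{X}$ of $\mathbb{G}$ explicitly: its vertices are the cosets $g\,\sigma_v(G_v)$ ranging over $g \in \pi$ and $v \in V(Y)$ (identifying each $G_v$ with its image in $\pi$), its edges the cosets $g\,\sigma_e(G_e)$, with incidence determined by the inclusions $\sigma_e$ and the stable letters $t_e$. The essential structural claim, and the step I expect to be the main obstacle, is that $\widetilde{X}$ is genuinely a tree. Connectedness follows readily from the fact that the vertex groups and the $t_e$ generate $\pi$; the absence of circuits is the delicate point and relies on a normal form (reduced word) theorem for $\pi_1(\mathbb{G},T)$, generalising Britton's lemma for HNN extensions and the normal form for amalgamated free products. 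It is precisely the injectivity of the maps $\sigma_e$ that makes this reduced word argument work. Granting that $\widetilde{X}$ is a tree, I would then check that $\pi$ acts on it without inversions with vertex stabilisers conjugate to the $\sigma_v(G_v)$ and edge stabilisers conjugate to the $\sigma_e(G_e)$, and that the quotient is $Y$ with induced graph of groups isomorphic to $\mathbb{G}$; this shows that applying the quotient construction to the Bass--Serre action recovers $\mathbb{G}$.

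For the converse, starting from the action of $\Gamma$ on $X$, I would exhibit a $\Gamma$-equivariant isomorphism from the Bass--Serre tree of the quotient graph of groups back onto $X$. Using the lift $S$ of $T$ and the conjugating elements $g_e$ fixed during the construction, the map sends a coset $\gamma\,\sigma_v(\Gamma_{\tilde v})$ to $\gamma \cdot \tilde v$, where $\tilde v \in S$ is the chosen representative of $v$. I would verify that this is well defined (independence of the coset representative uses that the stabiliser of $\tilde v$ equals $\Gamma_{\tilde v}$), surjective (every vertex of $X$ lies in the $\Gamma$-orbit of some $\tilde v$), injective, and equivariant by construction; simultaneously the assignment $\Gamma_{\tilde v} \mapsto \Gamma_{\tilde v}$ and $t_e \mapsto g_e$ yields an isomorphism $\pi_1(\mathbb{G},T) \cong \Gamma$, the defining relations holding exactly because each $\sigma_e$ was defined by inclusion followed by conjugation by $g_e$. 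As in the first direction, the injectivity and well-definedness arguments ultimately rest on the reduced word theorem, so that normal form result is the technical heart on which the whole equivalence depends.
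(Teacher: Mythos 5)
The paper gives no proof here: the statement is quoted directly from Serre's \emph{Trees} (Section 5.4, Theorem 13) as background, and your outline is essentially Serre's own argument --- the coset construction of the Bass--Serre tree, acyclicity via the reduced-word theorem, and in the converse direction the equivariant map $\gamma\,\Gamma_{\tilde v}\mapsto \gamma\cdot\tilde v$ together with $t_e\mapsto g_e$. Your proposal is a correct outline of that standard proof, with the caveat that the normal form theorem you grant is, as you yourself note, where all the actual work lies, so a self-contained proof would still have to supply it.
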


\subsection*{Collapsing Maps}
Given an action of $G$ on a tree $T$, we can obtain a new action of $G$ on another tree $T'$ by collapsing each member of some $G$-orbit(s) of edges to a point. This process usually changes the elliptic subgroups: the effect on the graph of groups is to replace any collapsed subgraphs with their (Bass--Serre theoretic) fundamental group, which up to isomorphism is the setwise stabiliser of some largest subtree containing only edges which are collapsed. If one has sufficient control over these subtrees and their setwise stabilisers, then this can clarify the algebraic structure of $G$. There is a $G$-equivariant collapse map $f: T \to T'$ given by sending each edge and vertex in a collapsed subtree to the vertex corresponding to it. (See for instance \cite[Section 3]{GLdeformation} for a discussion of these and related ideas involved in comparing actions of the same group on different trees.)

\section{Proofs of Theorems \ref{main} and \ref{char0}}

Before proving Theorems \ref{main} and \ref{char0}, we first show that under certain conditions vertex and edge stabilisers in $\pslk$ have unique lifts to $\slk$.

\begin{lem}\label{stab-lift}
Let $G$ be a discrete subgroup of $\pslk$ with no $2$-torsion. For each vertex $v$ and edge $e$ of $T_K$, there is a unique lift of $G_{v}$ and $G_e$ to $\slk$. 
\end{lem}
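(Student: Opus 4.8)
The plan is to reduce the statement to a purely group-theoretic fact about finite groups of odd order. First I would observe that the stabilisers in question are finite: by \Cref{disc-stab}$(1)$ every vertex stabiliser $G_v = \stab_G(v)$ is finite, and since any element fixing an edge $e$ also fixes its two endpoints, each edge stabiliser $G_e$ is contained in a vertex stabiliser and is therefore finite as well. Thus it suffices to prove that an arbitrary finite subgroup $H$ of $\pslk$ with no $2$-torsion admits a unique lift to $\slk$.

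The key observation is that such an $H$ has odd order. Indeed, a finite group with no element of order $2$ must have odd order, by Cauchy's theorem. (Alternatively, this follows from the explicit description in \Cref{finite}: $H$ is either cyclic of odd order, or of Borel type $V \rtimes C$ with $V$ an elementary abelian $p$-group for $p$ odd; since the complement $C$ embeds in $H$ and so inherits the no-$2$-torsion hypothesis, $C$ must also have odd order, whence $|H| = |V|\,|C|$ is odd.)

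Now I would consider the central extension
$$1 \longrightarrow \{\pm I\} \longrightarrow \slk \stackrel{\pi}{\longrightarrow} \pslk \longrightarrow 1,$$
in which $\{\pm I\} \cong \mathbb{Z}/2\mathbb{Z}$ because $\chk \neq 2$. Pulling this back along the inclusion $H \hookrightarrow \pslk$ yields a central extension of $H$ by $\mathbb{Z}/2\mathbb{Z}$, whose isomorphism class lies in $H^2(H, \mathbb{Z}/2\mathbb{Z})$, and a lift of $H$ is precisely a splitting of this extension. Since $|H|$ is odd and hence coprime to $2$, we have $H^n(H, \mathbb{Z}/2\mathbb{Z}) = 0$ for all $n \geq 1$: in particular $H^2(H, \mathbb{Z}/2\mathbb{Z}) = 0$, so a lift exists, while $H^1(H, \mathbb{Z}/2\mathbb{Z}) = \mathrm{Hom}(H, \mathbb{Z}/2\mathbb{Z}) = 0$, so any two lifts differ by a homomorphism $H \to \{\pm I\}$ and therefore coincide. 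This gives existence and uniqueness simultaneously.

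I expect that essentially all of the content lies in the reduction to odd order: once that is secured, the vanishing of $H^1$ and $H^2$ with $\mathbb{Z}/2\mathbb{Z}$-coefficients delivers both existence and uniqueness at once and almost for free. The one point to handle with care is verifying that the no-$2$-torsion hypothesis really does propagate to the edge and vertex stabilisers and forces odd order — that is, that it rules out not merely the central involution $-I$ but every involution in the relevant finite subgroups. If one prefers to avoid cohomology, the same conclusion can be obtained by hand: for $h \in H$ of (necessarily odd) order $d$, exactly one of its two preimages $\pm\tilde h$ satisfies $(\pm\tilde h)^d = I$, which pins down a canonical preimage and hence the unique candidate section, and one then checks it is a homomorphism.
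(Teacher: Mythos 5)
Your proof is correct, and it takes a genuinely different route from the paper's. The paper establishes uniqueness exactly as in your closing remark (each element of odd order $k$ has exactly one preimage of order $k$, so any lift is forced), but for existence it invokes the classification of finite subgroups of $\pslk$ without $2$-torsion (\Cref{finite}): cyclic stabilisers are lifted via a generator, and stabilisers of Borel type are conjugated into upper triangular form, where one checks by hand that the set of order-preserving preimages is closed under multiplication (the diagonal entries are $m$-th roots of unity for a fixed odd $m$). Your argument bypasses the classification entirely: Cauchy's theorem converts ``no $2$-torsion'' into ``odd order'', the restriction--corestriction argument kills $H^n(H,\mathbb{Z}/2\mathbb{Z})$ for all $n\ge 1$, and the vanishing of $H^2$ and of $\mathrm{Hom}(H,\{\pm I\})$ deliver existence and uniqueness of the splitting simultaneously; your direct computation that two lifts differ by a homomorphism into the centre is also sound. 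What you give up is the explicit description of the lift, but nothing downstream in the paper uses that description --- the proof of Theorem A needs only existence and uniqueness --- and your argument dovetails nicely with the cohomological criterion of \Cref{lift-rep}. The one step worth making explicit is your claim that an element fixing an edge fixes its endpoints: this holds because $\slk$ preserves the type (parity) of vertices of $T_K$ and so acts without inversions; alternatively, the setwise stabiliser of an edge contains the pointwise stabiliser of its endpoints with index at most two, which suffices for finiteness via \Cref{disc-stab} in any case.
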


\begin{proof}
Let $G_x$ be the stabiliser in $G$ of either a vertex or an edge of $T_K$. By \Cref{disc-stab}, $G_x$ is a finite group.
  First we show that at most one lift of $G_x$ to $\slk$ exists. Let $g \in G_x$ have order $k$.
	Since $G$ and hence $G_x$ has no 2-torsion, $k$ is odd and exactly one of the pre-images $\pi^{-1}(g) = \{g', -g'\}$ in $\slk$ has order $k$.
	Thus each element of $G_x$ has a unique lift, so a lift of $G_x$, if it exists, must be unique.
	
	Next we show that such a lift exists. By \Cref{finite}, $G_x$ is either cyclic or of Borel type.
  If $G_x$ is cyclic, then the lift of $G_x$ is determined by the lift of a generator of $G_x$. Hence we may suppose that $G_x$ is of Borel type and $\chk=p$. By conjugating if necessary, we may assume that $G_x$ fixes $\infty$ on the projective line $\mathbb{P}^1(K)$. 	
  
  Let $\phi \colon G_x \to \slk$ be the map such that $\phi(g) \in \pi^{-1}(g)$ has the same order as $g$. Note that $\phi(G_x)$ is upper triangular since $G_x$ fixes $\infty$. There exists an odd integer $m$ coprime to $p$ such that the diagonal entries of each element of $\phi(G_x)$ are $m$-th roots of unity. Hence $\phi(G_x)$ is closed under multiplication and thus a group. Since $-I \notin \phi(G_x)$, we conclude that $\phi$ is a lift of $G_x$.
\end{proof}

\begin{proof}[Proof of \Cref{main}]
Let $G$ be a discrete subgroup of $\pslk$. It suffices to show that, if $G$ has no 2-torsion, then it can be lifted to a subgroup of $\slk$. Let $Y$ be the quotient graph $G \backslash T_{K}$ with spanning tree $T$, and consider the corresponding graph of groups $\G = (G, Y)$.
 
 Observe that the set $\set{t_e \given e \in E(Y) \setminus E(T)}$ (up to restricting to an orientation of each edge) freely generates a free subgroup of $\pi_1(\G, T)$.
   Let $W$ be the disjoint union of $G_v$ and $t_e$ for each vertex $v$ and edge $e$ of $Y$. In other words, $W$ is the set of symbols in the standard presentation for $\pi_1(\G, T)$.
  Define a map $\phi \colon W \to \slk$ on generators by letting $\phi$ restrict to the map given by \Cref{stab-lift} on each $G_v$. For each $t_e$, we choose $\phi(t_e)$ to be an arbitrary lift of $t_e$ such that $\phi(t_{\overline{e}}) = \phi(t_e)^{-1}$. If $t_e$ is trivial in $G$, then necessarily $\phi(t_e) = 1$. If $t_e$ is nontrivial in $G$ (and hence of infinite order), then $\phi(t_e)$ is arbitrarily chosen from the two possible lifts; the particular choice made does not affect the argument.

  Let $H$ be the group generated by the image of $\phi$. We see that the projection of $\slk$ onto $\pslk$ induces a surjection $\psi \colon H \twoheadrightarrow G$.
  
  We prove that $\phi$ sends every relation in $G$ to a relation in $H$.
  Firstly, $\phi(t_{\overline{e}}) = \phi(t_e)^{-1}$ by construction.
  Secondly, if $e \in E(T)$, then $t_e = 1$ so $\phi(t_e) = 1$.
  Finally, let $e \in E(Y)$.
  
We identify each $g \in G_e$ as an element of $G$ (and hence of $\pslk$) via $g=\sigma_{\overline{e}}(g)$. 
Define maps $f_1, f_2: \sigma_e(G_e) \to \slk$ by
\[
f_1(\sigma_e(g)) = \phi(t_e)\phi(\sigma_e(g))\phi(t_e^{-1}), \qquad f_2(\sigma_e(g)) = \phi(\sigma_{\overline{e}}(g)).
\]
We need to check that (for every edge $e$) $f_1=f_2$. Notice that the relation $t_e \sigma_e(g) t_e^{-1} = \sigma_{\overline{e}}(g)$ implies that $f_2(\sigma_e(g)) = \phi(t_e \sigma_e(g) t_e^{-1})$ for $g \in G_e$. Let $\theta$ be the inner automorphism of $G$ given by $\theta(h) = t_e^{-1} h t_e$ for each $h \in G$. Note that $\theta$ is surjective and it sends $\sigma_{\overline{e}}(G_e)$ isomorphically to $\sigma_e(G_e)$. Now consider the compositions $\pi \circ f_1 \circ \theta$ and $\pi \circ f_2 \circ \theta$, where we restrict to $G_e=\sigma_{\overline{e}}(G_e)$ 
so that these maps are well-defined. Observe that, since $\pi$ is a homomorphism and $\pi \circ \phi$ is the identity map on $W$, both compositions are the identity map; in particular $f_1 \circ \theta$ and $f_2 \circ \theta$ are both lifts of $G_e$. Hence by \Cref{stab-lift} these two lifts are equal. Since $\theta$ is surjective, $f_1=f_2$ as we needed to show.
\end{proof}

\begin{remark}
In general it is not true that a lift on vertex stabilisers can be extended to a lift of the whole group; the uniqueness appears to be necessary unless one has very fine control. Consider the quotient map from the Baumslag--Solitar group $\mathrm{BS}(4,6) =\langle a,t : ta^4t^{-1} = a^6 \rangle $ to $\mathrm{BS}(2,3) =\langle b,s : sb^2s^{-1} =b^3 \rangle $, sending $a \mapsto b$ and $t \mapsto s$. Note that $\mathrm{BS}(2,3)$ acts on a tree with one orbit each of edges and vertices (as an HNN extension of $\mathbb{Z}$ with respect to its index 2 and 3 subgroups), and both the vertex group and edge generators lift to the generators of $\mathrm{BS}(4,6)$, but this cannot extend to a lift of the whole group since the groups are not isomorphic. The failure of uniqueness is witnessed by elements such as $ta^2t^{-1}a^{-2}$, which is a loxodromic element of $\mathrm{BS}(4,6)$ descending to $b$ in $\mathrm{BS}(2,3)$.

\end{remark}

\begin{proof}[Proof of \Cref{char0}]
Let $G$ be a discrete subgroup of $\pslk$ with no 2-torsion, where $\chk=0$. Let $X$ be the tree obtained from $T_K$ by collapsing every edge with non-trivial stabiliser.
Note that the collapsing map $f \colon T_K \to X$ is equivariant,
and so $X$ inherits an action of $G$ from $T_K$.

 We observe that, since all fixed point sets were collapsed, every edge stabiliser of $X$ in $G$ is trivial.
	
Now let $v$ be a vertex of $X$. We first show that $f^{-1}(v)=\fix(g)$ for some $g\in G$. 
Indeed, if $x,y \in f^{-1}(v)$, then there is a path joining them where every edge has non-trivial stabiliser. In particular, there is some sequence of vertices $x=x_0, x_1, \dots ,x_n=y$ of $T_K$ where each consecutive pair $\{x_i,x_{i+1}\}$ is contained in some $\fix(g_i)$, and we may assume that $g_i \neq g_{i+1}$ (if not, $x_{i+1}$ may be omitted). This implies that $x_{i+1} \in \fix(g_i) \cap \fix(g_{i+1})$, whence \Cref{key-collapse-gen} implies that either $\fix(g_i) \subseteq \fix(g_{i+1})$, or $\fix(g_{i+1}) \subseteq \fix(g_{i})$. In either case, both $x_i$ and $x_{i+1}$ are contained in the larger fixed point set, and we may remove $x_{i+1}$ from the sequence. Continuing in this fashion, the sequence is eventually reduced to the two vertices $x$ and $y$, which must be contained in $\fix(g)$ for some $g \in G$. Note that $g$ necessarily has finite order by \Cref{disc-stab}.

Now let $H$ be the set-wise stabiliser in $G$ of $f^{-1}(v)=\fix(g) \subseteq T_K$. By \Cref{fix-classes}, $\fix(g)$ is a band with nerve $S$ which is either a vertex, an edge or a bi-infinite ray. Note that $H$ must preserve $S$. If $S$ is either a vertex or an edge, then $H$ is a finite cyclic group by \Cref{all-cyclic}. On the other hand, if $S$ is a bi-infinite ray, then each element of $H$ either fixes or swaps the ends of $S$. Since $G$ contains no elements of order 2, we deduce 
that every element of $H$ fixes both ends of $S$. Hence we obtain a set of diagonalisable matrices with the same eigenvectors, so they commute and are simultaneously diagonalisable. If $H$ contains no hyperbolic elements then $H$ fixes a point of $S$ and is hence cyclic by \Cref{all-cyclic}. If $H$ does contain hyperbolic elements stabilising $S$ then it contains one of smallest translation length, hence the hyperbolic elements of $H$ form an infinite cyclic group. Since $H$ is abelian it is hence either a finite cyclic group or a direct product of a finite cyclic and an infinite cyclic group. 
The conclusion follows by applying \Cref{B-S}: as remarked above the proof edge stabilisers are trivial, hence the edge relations in the fundamental group are empty, and so $G$ is a free product of its vertex stabilisers and a free group.
\end{proof}

\begin{remark}
The difficulty with duplicating this argument in positive characteristic lies in the subgroups of Borel type, and understanding how their fixed point sets can intersect fixed point sets of other subgroups: it is no longer necessarily true that the subtrees $f^{-1}(v)$ coincide with fixed point sets. 
\end{remark}

\section{Proof of Theorem \ref{no-lift}}

We recall the classification of local fields \cite[Proposition II.5.2]{N}.
Let $K$ be a non-archimedean local field. If $\chk=0$, then $K$ is a finite extension of $\qp$ and $\mathbb{Q}$ is the corresponding {\it prime field}. If $\chk>0$, then $K$ is a field of formal Laurent series $\fq((t))$ and $\fp$ is the corresponding prime field. 
The following two basic lemmas are well-known. 
\begin{lem}\label{infinite-trans-degree}
Every non-archimedean local field has infinite transcendence degree over its corresponding prime field.
\end{lem}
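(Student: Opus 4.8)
The plan is to run a pure cardinality argument. Both candidate prime fields appearing in the classification recalled above are countable ($\qq$ is countable and $\fp$ is finite), so the key fact I would isolate first is that any field which has \emph{finite} transcendence degree over a countable field $F$ is itself countable. I would then contrast this with the observation that a non-archimedean local field is \emph{uncountable}, which immediately forces the transcendence degree over the prime field to be infinite.

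For the auxiliary countability fact, suppose $\operatorname{trdeg}_F(K) = n < \infty$ and fix a transcendence basis $t_1, \dots, t_n$. Then $K$ is algebraic over the purely transcendental subfield $F(t_1, \dots, t_n)$, which is countable because $F$ is countable and we are adjoining only finitely many indeterminates (so there are only countably many rational functions with coefficients in a countable field). Since the algebraic closure of a countable field is again countable---each element satisfies one of countably many polynomials, each having finitely many roots---we conclude that $K$, being algebraic over a countable field, is countable.

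It then remains to verify that $K$ is uncountable in each case. When $\chk = 0$, $K$ is a finite extension of $\qp$, hence a finite-dimensional $\qp$-vector space, so $|K| = |\qp| = 2^{\aleph_0}$, using that $\qp$ has the cardinality of the continuum. When $\chk = p > 0$, we have $K = \fq((t))$, and a formal Laurent series is determined by its sequence of coefficients in $\fq$, giving $|K| = q^{\aleph_0} = 2^{\aleph_0}$. In either case $K$ is uncountable, which together with the countability fact yields a contradiction if the transcendence degree were finite.

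The argument is essentially bookkeeping, so I do not expect a serious obstacle; the only place invoking an external standard fact is the uncountability of $\qp$ (equivalently, that $\qp$ has cardinality continuum), which is where I would insert a pointer to the literature rather than reprove it.
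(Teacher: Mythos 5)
Your proposal is correct and follows essentially the same route as the paper: the paper's proof also reduces to uncountability of the local field (citing Cassels for the characteristic-zero case and using the bijection between formal power series and maps $\mathbb{N}\to\fq$ in positive characteristic), leaving implicit the auxiliary fact that a field of finite transcendence degree over a countable field is countable, which you spell out. No issues.
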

\begin{proof}
For the characteristic zero case, see \cite[Chapter 5, Lemma 2.3]{Cassels}. In positive characteristic, note that the formal power series over a field $k$ are in bijection with the maps $\mathbb{N}\to k$. Hence its fraction field, the Laurent series over $k$, is uncountable. 
\end{proof}

\begin{lem}\label{squares-open}
Let $K$ be a non-archimedean local field with $\chk \neq 2$. The set of non-zero squares in $K$ forms an open subset of $K$.
\end{lem}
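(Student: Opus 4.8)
The plan is to show that the squaring map $x \mapsto x^2$ is an open map on $K^\times$, from which it follows immediately that the image $(K^\times)^2$ of the non-zero squares is open. The natural tool here is a non-archimedean version of the inverse function theorem (or more elementarily, Hensel's lemma), exploiting the fact that the derivative of $x^2$ is $2x$, which is a unit precisely because $\chk \neq 2$ and $x \neq 0$.

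Concretely, I would fix a non-zero square $a^2$ with $a \in K^\times$ and show that every element sufficiently close to $a^2$ is again a square. Writing an arbitrary nearby element as $a^2 + \delta$ with $v(\delta)$ large, I want to solve $y^2 = a^2 + \delta$ for some $y \in K$. Setting $y = a + \epsilon$, this becomes $2a\epsilon + \epsilon^2 = \delta$, so the task is to find a root of the polynomial $f(X) = X^2 + 2aX - \delta$. Since $\chk \neq 2$, the element $2a$ is a unit times a power of $u$, and for $v(\delta)$ sufficiently large relative to $v(a)$ one checks the Hensel condition $v(f(0)) = v(\delta) > 2\,v(f'(0)) = 2\,v(2a)$, which guarantees a root $\epsilon \in \oo$ with $v(\epsilon)$ large. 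Thus $a^2 + \delta = (a+\epsilon)^2$ is a square, showing a whole ball around $a^2$ lies in $(K^\times)^2$.

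The main step to get right is the precise Hensel estimate: one needs to verify that the ``strong'' form of Hensel's lemma applies, i.e. that the valuation of $f$ evaluated at the approximate root $0$ exceeds twice the valuation of $f'$ at that point, and to extract an explicit radius (depending only on $v(a)$) for the ball of squares around $a^2$. This is entirely routine once the inequality is set up, and since $0$ is not a square, we do not need to worry about it lying in the open set; the openness concerns only $K^\times$, and $(K^\times)^2$ is open in $K$ as it avoids a neighbourhood of $0$ as well. I expect no genuine obstacle here — the only mild care is bookkeeping the valuations so that the perturbation $\epsilon$ is small enough that $a + \epsilon$ remains non-zero, which is automatic once $v(\epsilon) > v(a)$ is arranged.
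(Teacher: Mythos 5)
Your proposal is correct and is essentially the paper's argument: both apply Hensel's lemma to a quadratic whose derivative is a unit multiple of $2$, which is where $\chk \neq 2$ enters. The only cosmetic difference is that the paper first reduces to a neighbourhood of $1$ via continuity of multiplication (so the polynomial $x^2-a$ automatically has coefficients in $\oo$ and the radius splits cleanly into the cases $q$ odd and $q$ even), whereas you work directly at each square $a^2$ and absorb $v(a)$ into the radius --- the valuation bookkeeping you flag is exactly the step that reduction avoids.
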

\begin{proof}
Since multiplication is continuous, it suffices to show that there is an open neighbourhood of $1\in K$ consisting entirely of squares. We closely follow the arguments of \cite[Chapter 4, Lemmas 3.2 and 3.3]{Cassels}, but in a more general setting. Recall that $q$ is the size of the residue field of $K$, and let $|-|$ denote the non-archimedean absolute value corresponding to $K$ given by $|x|=q^{-v(x)}$.

If $q$ is odd, then suppose $a \in K$ is such that $|a-1|<1$. In particular, $v(a-1)>0$ and the ultrametric inequality implies that $a \in \oo$. Consider the polynomial $f(x)=x^2-a\in \oo[x]$, and note that $|f'(1)|=|2|=1$. Thus $|f(1)|<|f'(1)|^2$, and it follows from Hensel's Lemma \cite[Chapter 4, Lemma 3.1]{Cassels} that $f(x)$ has a root in $\oo \subseteq K$, whence $a$ is a square. 

If $q$ is even, then suppose $a \in K$ is such that $|a-1|<q^{-3}$. Again, the ultrametric inequality implies that $a \in \oo$, and we consider the polynomial $f(x)=x^2-a \in \oo[x]$. Observe that $|f'(1)|=|2|=q^{-1}$. Hence $|f(1)|<|f'(1)|^2$, so $a$ is a square by Hensel's Lemma \cite[Chapter 4, Lemma 3.1]{Cassels}.
\end{proof}

Recall that we may assume $\chk \neq 2.$ Let $i=\sqrt{-1}$ and consider the field extension $K(i)$ of $K$. Note that $K(i)=K$ whenever $q \equiv 1 \mod 4$ by Hensel's Lemma \cite[Chapter 4, Lemma 3.1]{Cassels}.

Let $S$ denote the set of all $(A,B,C,D) \in \slki^4$ such that $A$ is diagonal and 
\begin{align}
	ABA^{-1}B^{-1}CDC^{-1}D^{-1} &= -I. \label{minus-surfae-eq}
\end{align}
This is an analogue to the set $X_0$ defined in \cite{Button}. We equip $S$ with the subspace topology inherited from $K(i)^{16}$.
 
Each quadruple $(A,B,C,D) \in S$ generates a subgroup of $\slki$ whose image in $\pslki$ does not lift. To see this, note that in $\pslki$ this relation descends to the relation $ABA^{-1}B^{-1}CDC^{-1}D^{-1} = I$. But all choices of lifts among $\{\pm A, \pm B, \pm C, \pm D\}$ evaluate to $-I$, so the subgroup of $\pslki$ does not lift. 

Given $(A,B,C,D) \in \slki^4$ and a word $w$ in the free group $F_4$ of rank four, we use $w(A,B,C,D)$ to denote the word $w$ evaluated on $(A,B,C,D)$, and $\pi w(A,B,C,D)$ to denote its projection to $\pslki$.

The proof of the lemma below follows the same general method of proof as \cite[Lemma 1]{Button}, however, here we prove a more general statement and provide explicit examples of matrices.

\begin{lem}\label{no-order-2}
For each word $w \in F_4$, there exists $(A,B,C,D) \in S$ such that $\pi w(A,B,C,D)$ is not of order $2$. 
\end{lem}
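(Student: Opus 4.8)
The plan is to convert ``order $2$'' into a trace condition, produce for each fixed $w$ a point of $S$ at which that condition fails by working over a rational function field, and then transport the example into $K(i)$ using \Cref{infinite-trans-degree}. First I would record the elementary reduction. For $g \in \slki$ with $\chk \neq 2$, Cayley--Hamilton gives $g^2 = (\tr g)\,g - I$, so $g^2 = -I$ precisely when $\tr g = 0$, whereas $g^2 = I$ forces $g = \pm I$. Hence $\pi w(A,B,C,D)$ has order $2$ in $\pslki$ if and only if $w(A,B,C,D)$ is traceless. It therefore suffices to exhibit, for each $w \in F_4$, a quadruple $(A,B,C,D) \in S$ with $\tr\bigl(w(A,B,C,D)\bigr) \neq 0$. (Words that are trivial, or equal to the defining relator of \eqref{minus-surfae-eq}, are immediate, their values having trace $2$ and $-2$ respectively; the content lies in the remaining words.)

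Next I would build an explicit parametrised family inside $S$. Write $k_0$ for the prime field of $K$ adjoined $i$, and introduce indeterminates $t_1, \dots, t_m$. Take $A$ diagonal with a transcendental eigenvalue, choose $B$ with entries in $k_0(t_1,\dots,t_m)$, form $M = [A,B]$, and realise $-M^{-1}$ as an explicit commutator $[C,D]$ (possible over a field whenever the target is not central, and arrangeable so that $C,D$ again have entries in $k_0(t_1,\dots,t_m)$). This produces a quadruple in $S\bigl(k_0(t_1,\dots,t_m)\bigr)$ for which $\tr\bigl(w(A,B,C,D)\bigr)$ is a rational function of the parameters; these are the ``explicit matrices'' to be written down.

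The crux is to show this rational function is not identically zero, and here I would argue on the Bruhat--Tits tree of the valued field $k_0(t_1,\dots,t_m)$ equipped with a $t$-adic valuation: if the parameters are chosen (depending on $w$) so that $w(A,B,C,D)$ acts as a hyperbolic isometry, then its two eigenvalues have distinct valuations, whence $\tr\bigl(w(A,B,C,D)\bigr)$ has finite valuation and is in particular nonzero. The main obstacle is precisely this step. Because the images of $A,B,C,D$ satisfy the genus-two surface relation $[A,B][C,D]=1$ in $\pslki$, they cannot generate a free group, so one cannot make every reduced word hyperbolic by a single Schottky configuration; hyperbolicity of the given $w$ must instead be secured word-by-word, or by a sufficiently generic choice within the commutator fibre over $-M^{-1}$, and checking that the adapted configuration indeed renders $w$ hyperbolic is the technical heart. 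Finally, since $K(i)$ has infinite transcendence degree over $k_0$ by \Cref{infinite-trans-degree}, there is a field embedding $k_0(t_1,\dots,t_m) \hookrightarrow K(i)$; it carries the family into $\slki$, preserves the relation \eqref{minus-surfae-eq}, and, being an injective field homomorphism, sends the nonzero trace to a nonzero element of $K(i)$, yielding the required point of $S$ at which $\pi w(A,B,C,D)$ is not of order $2$.
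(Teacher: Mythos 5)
Your framing is correct as far as it goes: in $\slki$ with $\chk \neq 2$ an element $g$ projects to an involution in $\pslki$ exactly when $\tr g = 0$ (note $\pm iI \notin \slki$ since $\det(iI)=-1$), so \Cref{no-order-2} is equivalent to the assertion that for each $w \in F_4$ the function $s \mapsto \tr(w(s))$ does not vanish identically on $S$. Your machinery of parametrising a family in $S$ by indeterminates over the prime field and transporting it into $K(i)$ via \Cref{infinite-trans-degree} is also sound, and in fact closely parallels what the paper does later in \Cref{dense-set} and the proof of \Cref{no-lift}. But the decisive step is missing, and you say so yourself: you need, for the given $w$, a choice of parameters at which $w(A,B,C,D)$ is hyperbolic (or merely has nonzero trace), and you offer no mechanism for producing one, only the hope of a ``sufficiently generic choice within the commutator fibre.'' Since non-vanishing of $\tr \circ w$ on $S$ \emph{is} the lemma, invoking genericity here is circular: to know a generic point works you must first know the trace function is not identically zero on your family. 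The obstruction is real, not merely technical --- because of the relation $[A,B][C,D]=-I$ the images do not generate a free group, and there are words (e.g.\ $w=A$ at quaternionic configurations) whose value is an involution on large natural subfamilies of $S$, so hyperbolicity genuinely has to be argued word by word.

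The paper closes exactly this gap by a finite case analysis that your outline does not supply. It argues by contradiction, classifies $w$ by the four parities $(a,b,c,d)$ of the exponent sums of $A,B,C,D$, and for each parity class writes down an explicit quadruple in $S$ built from the quaternion group $Q_8$ together with a hyperbolic element that is inverted or centralised by the $Q_8$-generators. The image of each such quadruple in $\pslki$ has a small computable presentation, e.g.\ $\langle \alpha,\beta,\gamma : \alpha^2=\beta^2=[\alpha,\beta]=1,\ \alpha\gamma\alpha=\beta\gamma\beta=\gamma^{-1}\rangle$, in which every element has a normal form $\gamma^k, \alpha\gamma^k, \beta\gamma^k, \alpha\beta\gamma^k$ whose type is forced by the parities; one then checks directly that the normal form compatible with the parity of $w$ is trivial or of infinite order rather than an involution, and runs through enough configurations (swapping the roles of $C$ and $D$, of $A$ and $B$, and varying the conjugation action on the hyperbolic generator) to exhaust all sixteen parity classes. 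Something playing this role --- an explicit, word-dependent verification of non-vanishing --- is precisely what your proposal still owes; without it the argument does not establish the lemma.
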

\begin{proof}
Suppose for a contradiction that there exists $w \in F_4$ such that $\pi w(A,B,C,D)$ has order 2 for every $(A,B,C,D) \in S$. 
Let $a,b,c,d\in \{0,1\}$ be the respective parities of the exponent sums of $A,B,C,D$ in $w(A,B,C,D)$.  We consider some particular choices of $(A,B,C,D) \in S$ to obtain the desired contradiction.
Throughout, we will use $u$ to denote a uniformiser of $K(i)$.

First consider the following set of matrices $(A,B,C,D) \in S$:

$$A = \begin{bmatrix} i & 0 \\ 0 & -i \end{bmatrix}, B =  \begin{bmatrix} 0 & 1 \\ -1 & 0 \end{bmatrix}, C= \frac{1}{2u}\begin{bmatrix} u^2+1 & u^2-1 \\ u^2-1 & u^2+1 \end{bmatrix}, D = \begin{bmatrix} 1 & 0 \\ 0 & 1 \end{bmatrix}.$$

Observe that $A$ and $B$ generate $Q_8$, the quaternion group with order $8$, and that $ACA^{-1}=BCB^{-1}=C^{-1}$. 
Moreover, \cite[Proposition II.3.15]{MS} implies that $C$ is hyperbolic (with respect to $T_{K(i)}$) and hence of infinite order.
The respective images $\alpha, \beta, \gamma, \delta$ of $A,B,C,D$ in $\pslki$ generate a subgroup with the following presentation:
\[\langle \alpha, \beta, \gamma : \alpha^2=\beta^2=[\alpha,\beta]=1, \alpha\gamma\alpha =  \beta\gamma\beta = \gamma^{-1} \rangle.\]

There are four possible normal forms in this group for $\pi w(A,B,C,D)$, that is $\gamma^k, \alpha\gamma^k, \beta\gamma^k, \alpha\beta\gamma^k$ for $k \in \mathbb{Z}$, and passing to such a normal form preserves the parities $a,b,c,d$. Of these, $\alpha\gamma^k$ and $\beta\gamma^k$ are involutions for any value of $k$, $\gamma^k$ is either infinite order or trivial, and $\alpha\beta\gamma^k$ is infinite order unless $k=0$, when it is an involution. This rules out the six parities where either $a=b=0$ or $a=b=c=1$. Repeating this argument with the roles of $C$ and $D$ interchanged additionally rules out $(a,b,c,d)=(1,1,0,1)$.

Now consider the following two sets of matrices $(A,B,C,D) \in S$: 
$$A = \begin{bmatrix} 1 & 0 \\ 0 &1 \end{bmatrix}, B =  \frac{1}{2u}\begin{bmatrix} u^2+1 & u^2-1 \\ u^2-1 & u^2+1 \end{bmatrix}, C= \begin{bmatrix} i & 0 \\ 0 & -i \end{bmatrix}, D = \begin{bmatrix} 0 & 1 \\ -1 & 0 \end{bmatrix}; $$
$$A = \begin{bmatrix} u & 0 \\ 0 &1/u \end{bmatrix}, B = \begin{bmatrix} 1 & 0 \\ 0 &1 \end{bmatrix}, C= \begin{bmatrix} 0 & -i \\ -i & 0 \end{bmatrix}, D = \begin{bmatrix} 0 & 1 \\ -1 & 0 \end{bmatrix}. $$

In the first set, $C$ and $D$ generate $Q_8$, and $B$ is a hyperbolic element such that $CBC^{-1}=DBD^{-1}=B^{-1}$. The respective images $\alpha, \beta, \gamma, \delta$ of $A,B,C,D$ in $\pslk$ generate a subgroup with the following presentation:
\[\langle \beta, \gamma, \delta : \gamma^2=\delta^2=[\gamma,\delta]=1, \gamma\beta\gamma =  \delta\beta\delta = \beta^{-1} \rangle.\]
A similar argument to the above rules out the parities where $c=d=0$ or $b=c=d=1$. In the second set of matrices, the roles of $A$ and $B$ are interchanged, while retaining the property that $A$ is diagonal. Observe that $A$ is hyperbolic by \cite[Proposition II.3.15]{MS}, and that $CAC^{-1}=DAD^{-1}=A^{-1}$. Hence similar reasoning shows that this rules out the parities $(a,b,c,d)=(1,0,1,1)$. 

This leaves four available choices of parities: $(1,0,1,0)$, $(1,0,0,1)$, $(0,1,1,0)$ and $(0,1,0,1)$. To this end, we consider the following two sets of matrices $(A,B,C,D) \in S$:
$$A = \begin{bmatrix} i & 0 \\ 0 & -i \end{bmatrix}, B =  \begin{bmatrix} 0 & 1 \\ -1 & 0 \end{bmatrix}, C= \begin{bmatrix} u & 0 \\ 0 &1/u \end{bmatrix}, D = \begin{bmatrix} 1 & 0 \\ 0 & 1 \end{bmatrix};$$

$$A = \begin{bmatrix} i & 0 \\ 0 & -i \end{bmatrix}, B =  \begin{bmatrix} 0 & 1 \\ -1 & 0 \end{bmatrix}, C= \frac{-1}{2u}\begin{bmatrix} u^2+1 & i(1-u^2) \\ i(u^2-1) & u^2+1 \end{bmatrix}, D = \begin{bmatrix} 1 & 0 \\ 0 & 1 \end{bmatrix}.$$

In both these cases, $A$ and $B$ again generate $Q_8$ and $C$ is hyperbolic. Observe that $ACA^{-1}=BC^{-1}B^{-1}=C$ in the first case, and $AC^{-1}A^{-1}=BCB^{-1}=C$ in the second case. If the parities were $(1,0,1,0)$, the only possible normal form for $\pi w(A,B,C,D)$ in the first case is $AC^k$ (with $k$ odd). However, this element has infinite order. Similarly, we can use the second case to rule out the parity $(0,1,1,0)$. The remaining two parities can be dealt with similarly, by interchanging the roles of $C$ and $D$.
\end{proof}

While we needed to work over the field extension $K(i)$ to obtain explicit matrices in the previous lemma, we now return to considering matrices in $\slk$.

\begin{lem}\label{dense-set}
There is a dense subset $Y$ of $S$, and an element $y^* \in Y \cap \slk^4$, such that if $\pi w (y^*)$ has order $2$ for some word $w \in F_4$ then $\pi w(y)$ has order $2$ for every $y \in Y$.
\end{lem}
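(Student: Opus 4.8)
The plan rests on reducing the order-$2$ condition to the vanishing of a trace. For $g\in\slki$ one has $g^2=-I$ if and only if $\tr(g)=0$, and since $\pm I$ have trace $\pm 2\neq 0$ (recall $\chk\neq 2$), this is equivalent to $\pi g$ having order exactly $2$ in $\pslki$. Hence for each word $w\in F_4$ the map $y\mapsto \tr(w(y))$ is a polynomial, with coefficients in the prime field $k_0$ of $K$ (the word $w$ has integer exponents, and both products and inverses in $\slki$ are polynomial in the entries with integer coefficients), in the sixteen entries of $y=(A,B,C,D)$, and $\pi w(y)$ has order $2$ precisely when this polynomial vanishes at $y$. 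Viewing $S$ as the $K(i)$-points of the affine variety $\mathcal{S}$ over $k_0$ cut out by \eqref{minus-surfae-eq} together with the condition that $A$ be diagonal, the locus where $\pi w$ has order $2$ is the zero set on $S$ of a regular function $\tr(w)\in k_0[\mathcal{S}]$.

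The strategy is to choose $Y$ and $y^*$ so that $y^*$ is a \emph{generic} point of the component of $\mathcal{S}$ on which it lies. First I would produce an explicit rational parametrization $\Phi\colon V\dashrightarrow\mathcal{S}$, defined over $k_0$, of a dense open subset of an irreducible component $\mathcal{S}_0$ of $\mathcal{S}$, where $V$ is open in affine $d$-space with $d=\dim\mathcal{S}_0$; then take $Y$ to be a dense subset of the image of $\Phi$ on $K(i)$-parameters. Using \Cref{infinite-trans-degree}, I would choose parameters $t_1,\dots,t_d\in K$ that are algebraically independent over $k_0$ and set $y^*=\Phi(t_1,\dots,t_d)$. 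Because $\Phi$ is defined over $k_0\subseteq K$, the entries of $y^*$ lie in $k_0(t_1,\dots,t_d)\subseteq K$, so $y^*\in\slk^4$; and because the $t_i$ are algebraically independent, $y^*$ is a generic point of $\mathcal{S}_0$, in the sense that any function in $k_0[\mathcal{S}]$ vanishing at $y^*$ vanishes on all of $\mathcal{S}_0$.

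The rigidity asserted in the lemma then follows immediately. If $\pi w(y^*)$ has order $2$, then $\tr(w)$ vanishes at the generic point $y^*$, so $\tr(w)\equiv 0$ on $\mathcal{S}_0$, and in particular $\tr(w)(y)=0$ for every $y\in Y\subseteq\mathcal{S}_0\bigl(K(i)\bigr)$; equivalently $\pi w(y)$ has order $2$ for all $y\in Y$. Concretely, $\tr(w)\circ\Phi$ is a rational function of $t_1,\dots,t_d$ with coefficients in $k_0$, and a nonzero such function cannot vanish at algebraically independent values, so its vanishing at $y^*$ forces it to be identically zero on the image of $\Phi$, hence on $Y$.

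The main obstacle is the construction of $\Phi$. One must exhibit a rational parametrization whose image is genuinely \emph{dense} in $S$ — strong enough for the intended application, where $Y$ is intersected with the nonempty open sets furnished by \Cref{no-order-2} — and which admits a parameter point defined over $K$ rather than merely over $K(i)$. The latter is the delicate point: the convenient quaternionic solutions of \eqref{minus-surfae-eq} underlying \Cref{no-order-2} require $i$, so one must instead distribute the two commutators $[A,B]$ and $[C,D]$ so that their product is $-I$ without either factor, or any entry, needing $\sqrt{-1}$, while keeping $A$ diagonal. Checking that the resulting family is analytically dense in $S$ and that its generic member lies on a geometrically irreducible component of $\mathcal{S}$ — so that the generic-point argument of the previous paragraph is valid — is where the real work lies.
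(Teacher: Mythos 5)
Your outline follows the same strategy as the paper's proof: convert ``$\pi w(y)$ has order $2$'' into the vanishing of the polynomial $\tr(w(y))$, parametrise a dense subset of $S$ by rational functions over the prime field, use \Cref{infinite-trans-degree} to place $y^*$ at algebraically independent parameter values, and conclude that vanishing at $y^*$ forces identical vanishing on $Y$. But you explicitly defer to ``the real work'' precisely the content that this lemma exists to supply, so as written the proof is incomplete at every point where it could fail. Concretely, the paper: solves \eqref{minus-surfae-eq} for $a,c,d,\lambda^2$ as rational functions of $\alpha_1,\alpha_2,\beta_1,\beta_2,\delta_1,\delta_2,b$ (equations \eqref{entries}, valid on the open locus \eqref{conditions}, whose complement being nowhere dense is what makes $Y$ dense in $S$); exhibits explicit $C,D$ over $K$ for which $-(1+\delta)/(1+\alpha)$ equals a prescribed $\lambda^2$ with $\lambda\in K$, producing the required point of $Y\cap\slk^4$; and then perturbs that point, using \Cref{squares-open} together with an uncountability-versus-countability argument, to make the seven parameters algebraically independent while staying inside $Y\cap\slk^4$. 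None of these steps appears in your proposal, and the $K$-rationality of $y^*$ in particular cannot be waved through: the construction forces $\lambda^2$ to equal a specific rational function of the other data, so keeping $\lambda$ in $K$ is an arithmetic condition that the paper handles via the openness of squares.

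There is also a structural point your ``generic point'' framework glosses over. The natural parametrisation is not by $d$ free parameters: \eqref{minus-surfae-eq} determines $\lambda^2$, not $\lambda$, so the relevant locus is a double cover of affine $7$-space, with function field a (possibly nontrivial) quadratic extension of the purely transcendental field $F(\alpha_1,\alpha_2,\beta_1,\beta_2,\delta_1,\delta_2,b)$. Your argument that ``a nonzero rational function of the $t_i$ cannot vanish at algebraically independent values'' applies only after one descends to that transcendental subfield. The paper does this by writing $P=P_1(\lambda^2)+\lambda P_2(\lambda^2)$ and multiplying by the conjugate $P_1(\lambda^2)-\lambda P_2(\lambda^2)$, so that the product is a polynomial in $\lambda^2=-(1+\delta)/(1+\alpha)$ and hence in the seven independent variables, and then exploits the $\lambda\mapsto-\lambda$ symmetry to recover the vanishing of $P$ itself on all of $Y$. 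To run your cleaner version instead, you would need to prove that this double cover is irreducible and rational over the prime field (so that a genuinely free parametrisation $\Phi$ exists and $y^*$ is an honest generic point), which is an additional claim you neither state nor verify. Either way, a concrete argument is missing at exactly the step where the lemma's conclusion is derived.
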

\begin{proof}
Following \cite{Button}, we consider matrices in $\slk$ of the form 
\[A=\begin{bmatrix}  \lambda & 0\\ 0 & \frac{1}{\lambda} \end{bmatrix}\:, B =\begin{bmatrix} a & b \\ c & d \end{bmatrix}\:, C = \begin{bmatrix} \alpha_1 & \beta_1 \\ \gamma_1 & \delta_1 \end{bmatrix} \:, D = \begin{bmatrix} \alpha_2 & \beta_2 \\ \gamma_2 & \delta_2 \end{bmatrix}\]
Now let
$$CDC^{-1}D^{-1}=\begin{bmatrix} \alpha & \beta \\ \gamma & \delta \end{bmatrix}, $$
so that $\alpha, \beta, \gamma, \delta$ are polynomials in the variables $\alpha_i, \beta_i, \gamma_i, \delta_i$ for $i \in \{1,2\}$. 

Let $S'$ be the set of matrices $(A,B,C,D)$
subject to the conditions
\begin{align}\label{conditions}
\alpha+\delta \neq -2,  \hspace{0.5cm} b,\beta \neq 0, \hspace{0.5cm} \alpha, \delta \neq -1.
\end{align}

To ensure that $S'$ is a subset of $S$ (that is, \Cref{minus-surfae-eq} is satisfied),
we can compute that:
\begin{equation}\label{entries}
  \begin{gathered}[c]
    \lambda^2=-\frac{1+\delta}{1+\alpha}, \\
    c=-\frac{(\alpha+1)(\delta+1)}{b(\alpha+\delta+2)},
  \end{gathered} \hspace{1.5cm}
  \begin{gathered}[c]
    a=-\frac{\beta(1+\alpha)}{b(\alpha+\delta+2)}, \\
    d=\frac{b(\alpha\delta-1)}{\beta(1+\alpha)}.
  \end{gathered}
\end{equation}

Let $Y$ be the subset of $S'$ where $\beta_1,\beta_2 \neq 0$. Note that $Y$ is dense in $S$. We may additionally write $\gamma_i=\frac{\alpha_i\delta_i-1}{\beta_i}$ for each $i \in \{1,2\}$, showing that all entries of $A, B, C, D$ can be expressed as rational functions of the eight variables $\alpha_1, \alpha_2, \beta_1, \beta_2, \delta_1, \delta_2, b, \lambda$ with coefficients in the prime field $F$ of $K$. 

In order to find an element of $Y \cap \slk^4$, for $\lambda \in K \backslash \{0,\pm 1\}$, we choose
\[C = \begin{bmatrix} \frac{\lambda^2+3}{1-\lambda^2} & 1\\-1 & 0\end{bmatrix}, D = \begin{bmatrix} 1 & 2 \\ 0 & 1 \end{bmatrix}.\]
Observe that $\alpha=1+\frac{2\lambda^2+6}{1-\lambda^2}$ and $\delta=5-\frac{2\lambda^2+6}{1-\lambda^2}$ so that $\lambda^2=-\frac{1+\delta}{1+\alpha}$. After excluding the solutions (if any) to $(\lambda^2+3)^2=8$, to ensure $\beta \neq 0$, all above conditions are satisfied in all characteristics except $\chk=2$, which is excluded from the start. Now choose some $b \in K^*$, so that using \Cref{entries} we obtain an element $y=(A,B,C,D) \in Y \cap \slk^4$.

Next we claim that we can find a nearby solution $y^* \in Y \cap \slk^4$ such that the seven values $\alpha_1, \alpha_2, \beta_1, \beta_2, \delta_1, \delta_2, b$ are algebraically independent. Suppose, for a contradiction, that $I$ is an algebraically independent set of maximal size for which a solution exists and let $k\notin I$ be one of the seven. Since non-zero squares in local fields form an open set by \Cref{squares-open}, and open balls are uncountable, and there are only countably many algebraic numbers in $F(I)$, we can slightly modify $k$ to find a nearby solution still respecting \ref{entries} and \ref{conditions} with $I\cup \{k\}$ algebraically independent, a contradiction.

Now suppose that $\pi w(y^*)$ has order 2 for some $w \in F_4$. Hence $\tr(w(y^*))=0$, giving a polynomial $P \in F[\alpha_1, \alpha_2, \beta_1, \beta_2, \delta_1, \delta_2, b, \lambda]$ such that $P(y^*)=0$. As in \cite{Button}, we may temporarily suppress the first seven variables and write this polynomial in the form $P=P_1(\lambda^2)+\lambda P_2(\lambda^2)$. It follows that 
\[[(P_1(\lambda^2) + \lambda P_2(\lambda^2))(P_1(\lambda^2) - \lambda P_2(\lambda^2))] (y^*)= (P_1^2(\lambda^2)-\lambda^2P_2^2(\lambda^2))(y^*)=0.\] 
Since $\lambda^2=-\frac{1+\delta}{1+\alpha}$ and the first seven variables are algebraically independent, we deduce that either $P_1(\lambda^2) + \lambda P_2(\lambda^2)$ or $P_1(\lambda^2) - \lambda P_2(\lambda^2)$ is identically zero. Since these polynomials can be obtained from each other by replacing $\lambda$ with $-\lambda$, we conclude that $\tr(w(y))=0$ for every $y \in Y$, giving the result.
\end{proof}

\begin{proof}[Proof of \Cref{no-lift}]
Choose $y^* \in Y \cap \slk^4$ as in \Cref{dense-set}, and let $G$ be the subgroup of $\pslk$ generated by the image of $y^*$. Note that $G$ does not lift to $\slk$.

Suppose that there is a word $w \in F_4$ for which $\pi w(y^*)$ has non-trivial finite order. Let $f$ be the minimal annihilating polynomial of $\tr(w(y^*))$ over the prime field of $K$. Hence we obtain the equation $f(\tr(w(y^*)))f(\tr(w(-y^*)))=0$, giving a polynomial $P \in F[\alpha_1, \alpha_2, \beta_1, \beta_2, \delta_1, \delta_2, b, \lambda]$ such that $P(y^*)=0$. As in the previous proof, we use the algebraic independence of the first seven variables and the relation $\lambda^2=-\frac{1+\delta}{1+\alpha}$ to deduce that $P$ is identically zero. Thus either $\tr(w(y))$ or $-\tr(w(y))$ is a root of $f$ for every $y \in Y$.

Now consider $A=B=I$ and $$C= \begin{bmatrix} 0 & -i \\ -i & 0 \end{bmatrix}, D = \begin{bmatrix} 0 & 1 \\ -1 & 0 \end{bmatrix}, $$
so that $(A,B,C,D) \in Y$ and every element of $\langle A,B,C,D \rangle$ has trace 0 or $\pm 2$. Since $f$ is irreducible we deduce that $f(x) \in \{x, x- 2, x+2\}$. This implies that $w(y^*)$ either has order 2 or it is unipotent. If $w(y^*)$ has order 2, then \Cref{dense-set} implies that $\tr(w(y))=0$ for each $y \in Y$. Since $Y$ is dense in $S$, continuity of the trace function implies that $\tr(w(s))=0$ for each $s \in S$. This contradicts \Cref{no-order-2}. If $w(y^*)$ is unipotent, we obtain a contradiction if $\chk=0$ and observe that $w(y^*)$ must have order $p$ if $\chk=p$.
\end{proof}

\section{Proof of Propositions \ref{lift-rep} and \ref{lift-rep-non-zero}} 

The following proof is identical to the argument for $\pslc$ in \cite{Button-add} and many more places besides; we include it for completeness.

\begin{proof}[Proof of \Cref{lift-rep}]
	Given a representation $\rho: \Gamma \to \pslk$, we can pull it back along the quotient $\pi: \slk \to \pslk$ to get a representation $\rho': \Gamma' \to \slk$, with $\Gamma'$ a central extension of $\Gamma$ by $\zz/2\zz$. The situation is as in this commutative diagram; both vertical rows are the middle three terms of a central extension.
	\begin{figure}[h!]
	\begin{tikzcd}
		\zz/2\zz \arrow{d} & \zz/2\zz \arrow{d} \\
		\Gamma' \arrow{d} \arrow{r}{\rho'} & \slk \arrow{d}{\pi} \\
		\Gamma \arrow{r}{\rho} \arrow[bend left,dashed]{u} & \pslk \\
	\end{tikzcd}
	\end{figure}
Central extensions by $\zz/2\zz$ are classified by the cohomology group $H^2(\Gamma;\zz/2\zz)$; since this is trivial, there is only the trivial extension and $\Gamma'\cong\zz/2\zz \times \Gamma$.  In particular, following any section $\Gamma \to \Gamma'$ by $\rho'$ gives a lift of $\rho$.
\end{proof}

Our final proof differs from \cite[Proposition 2]{Button-add} in that we also consider the alternating groups $A_6$ and $A_7$, and require an additional argument in the positive characteristic setting.

\begin{proof}[Proof of \Cref{lift-rep-non-zero}]
We follow the same argument as \cite[Proposition 2]{Button-add}. First note that $A_n$ is simple for $n \ge 5$ and $H^2(A_n; \mathbb{Z}/2\mathbb{Z})=\mathbb{Z}/2\mathbb{Z} \neq 0$ for $n \ge 4$, which follows from their Schur multipliers \cite[page 27]{Wilson} and a short calculation with the Universal Coefficient Theorem \cite[Theorem 3.2]{Hatcher}. Furthermore, for each $n \ge 6$, the group $A_n$ does not occur as a subgroup of $\pslk$, except for $A_6 \cong {\rm PSL_2(9)}$ when $\chk=3$. Indeed, the only feasible possibilities for an isomorphism between $A_n$ and the subgroups of $\pslk$ listed in \cite[Theorem 1]{VM} are with a group of Borel type, or a group of the form ${\rm PGL}(2,q')$ or ${\rm PSL}(2,q')$ for some divisor $q'$ of $q$, all of which occur only in positive characteristic. The first two possibilities would contradict that $A_n$ is simple and, other than the case $A_6 \cong {\rm PSL_2}(9)$, the last possibility is ruled out by \cite[page 3]{Wilson}. 
Hence, when $n\ge 7$ or $n=6$ and $\chk \neq 3$, the only representation of $A_n$ into $\pslk$ is the identity representation. This can certainly be lifted, which completes the proof.
\end{proof}


\begin{thebibliography}{9}

\bibitem{BC} J. Bella{\"{i}}che and G. Chenevier, Sous-groupes de ${\rm GL_2}$ et arbres, {\it J. Algebra} {\bf 410} (2014), 501--525.

\bibitem{Button} J. Button, Lifting M\"obius groups, {\it New York J. Math.} {\bf 8} (2002) 181--188.

\bibitem{Button-add} J. Button, Lifting M\"obius groups: addendum, {\it New York J. Math.} {\bf 8} (2002) 257--258.

\bibitem{Cassels} J. W. S. Cassels, {\it Local Fields}, Cambridge University Press (1986).

\bibitem{Culler} M. Culler, Lifting representations to covering groups, {\it Adv. Math.} {\bf 59} (1986), 64--70.

\bibitem{GLdeformation} V. Guirardel and G. Levitt, Deformation spaces of trees, {\it {Groups Geom. Dyn.}}, {\bf 1}:2 (2007), 135--181.

\bibitem{Hatcher} A. Hatcher, {\it Algebraic Topology}, Cambridge University Press (2002).

\bibitem{K} F. Kato, Non-Archimedean orbifolds covered by Mumford curves, {\it J. Algebraic Geom.} {\bf 14}:1 (2005), 1--34.

\bibitem{Magnus} W. Magnus, Noneuclidean tessellations and their groups (Academic Press, New York, 1974).

\bibitem{MS} J. W. Morgan and P. B. Shalen, Valuations, trees, and degenerations of hyperbolic structures I, {\it Ann. of Math. (2)} {\bf 120}:3 (1984), 401--476.

\bibitem{N} J. Neukirch, Algebraic Number Theory, Grundlehren der Mathematischen Wissenschaften. Vol. 322. (1999).

\bibitem{Serre} J.-P. Serre, {\it Trees} (translated by J. Stillwell), Springer-Verlag (1980).


\bibitem{VM} R. C. Valentini and M. L. Madan, A Hauptsatz of L. E. Dickson and Artin-Schreier extensions, {\it Journal f\"{u}r die reine und angewandte Mathematik} {\bf 318} (1980), 156--177.

\bibitem{Vanderput} M. van der Put and H. H. Voskuil, Discontinuous subgroups of $\mathrm{PGL}_2(K)$, {\it Journal of Algebra} {\bf 271} (2004) 234--280.

\bibitem{Wilson} R. Wilson, {\it The Finite Simple Groups}, Graduate Texts in Mathematics. Vol. 251. Berlin, New York (2009).

\end{thebibliography}
\end{document}